\documentclass[12pt]{article}
\usepackage[top=3cm,bottom=3cm,left=2.75cm,right=2.75cm]{geometry}
\usepackage{amssymb}
\usepackage{amsmath,amsthm}
\usepackage[latin1]{inputenc}
\usepackage[dvips]{graphicx}
\usepackage{color}
\usepackage{mathrsfs}
\usepackage{enumerate}
\usepackage{tikz}
\usepackage{tkz-graph}
\usepackage{hyperref}
\usepackage{xifthen}
\usepackage{verbatim}

\hypersetup{colorlinks=true}

\hypersetup{colorlinks=true, linkcolor=blue, citecolor=blue,urlcolor=blue}

\newtheorem{remark}{Remark}[section]

\newtheorem{lemma}[remark]{Lemma}
\newtheorem{theorem}[remark]{Theorem}

\title{On the (independent) semitotal domination in subdivision, middle, and central graphs}
\author{Abel Cabrera-Mart\'inez$^{1}$, Jos\'e Luis L\'opez-Carmona$^{1}$\\[5pt]
Ismael Rios-Villamar$^{2}$, Alejandro Serrano-D\'iaz$^{1}$\\[15pt]
{\small $^{1}$ Universidad de C\'ordoba, Departamento de Matem\'aticas, Campus de Rabanales, 14071,} \\
{\small C\'ordoba, Spain (acmartinez@uco.es, 2locaj@uco.es, aserrano1@uco.es)}\\[7pt]
{\small $^{2}$ Universidad Aut\'onoma de Guerrero,  Facultad de  Matem\'{a}ticas, Carlos E. Adame 54,}\\
{\small  Col. La Garita 39650, Acapulco, Guerrero, Mexico (18305783@uagro.mx)}\\
}

\date{ }
\begin{document}
\maketitle

\begin{abstract}
A dominating set $D$ of a nontrivial connected graph $G$ is called a semitotal dominating set of $G$ if every vertex in $D$ is at distance at most two from another vertex in $D$. If, in addition, $D$ is an independent set, then $D$ is called an independent semitotal dominating set of $G$. 
The (independent) semitotal domination number of $G$ is the minimum cardinality among all (independent) semitotal dominating sets of~$G$. In this paper, we obtain closed formulas for these parameters in the following three well-known graph operators defined from a connected graph: the subdivision, middle, and central graphs. 
\end{abstract}

\noindent
{\it Keywords}:
semitotal domination, independent semitotal domination, graph operators.

\vspace{.2cm}

\noindent
{\it AMS Subject Classification Numbers:} 05C69, 05C75.

\vspace{.2cm}

\section{Introduction}

Let $G$ be a nontrivial connected graph with vertex set $V(G)$ of order $n=|V(G)|$ and edge set $E(G)$ of size $m=|E(G)|$. Let $V(G)=\{v_1, \ldots , v_n\}$ and $V_E(G)=\{v^{i,j} : v_iv_j\in E(G)\}$ (observe that $v^{i,j}=v^{j,i}$).  In this paper, we study certain graph operators arising structural modifications that incorporate edge subdivision as a common underlying feature. The graph operators considered are described as follows.  	
\begin{itemize}
\item The \emph{subdivision graph} $\mathtt{S}(G)$, explicitly considered in~\cite{Harary1969}, is obtained from $G$ by subdividing each edge exactly~once. Formally, $V(\mathtt{S}(G))=V(G)\cup V_E(G)$ and $E(\mathtt{S}(G))= \{v_iv^{i,j},v_jv^{i,j}: v^{i,j}\in V_E(G)\}$.
\item The \emph{middle graph} $\mathtt{M}(G)$ is obtained from the subdivision graph $\mathtt{S}(G)$ by joining two vertices in $V_E(G)$ whenever their corresponding edges are adjacent in $G$. Formally, $V(\mathtt{M}(G))=V(G)\cup V_E(G)$ and $E(\mathtt{M}(G))=\{v_iv^{i,j},v_jv^{i,j}: v^{i,j}\in V_E(G)\}\cup E(\mathtt{L}(G))$, where $\mathtt{L}(G)$ is the line graph of $G$. Middle graphs were introduced by Hamada and Yoshimura~\cite{Intro-M}. 
\item The \emph{central graph} $\mathtt{C}(G)$ is obtained from the subdivision graph $\mathtt{S}(G)$ by joining two vertices in $V(G)$ whenever they are nonadjacent in $G$. Formally, $V(\mathtt{C}(G))=V(G)\cup V_E(G)$ and $E(\mathtt{C}(G))=E(\overline{G})\cup \{v_iv^{i,j},v_jv^{i,j}: v^{i,j}\in V_E(G)\}$, where $\overline{G}$ is the complement graph of $G$. Central graphs were introduced by Vernold~\cite{Intro-C}.
\end{itemize}
As can be observed, these three graph operators have the same vertex set. Figure~\ref{fig-RMS} illustrates, from left to right, a graph $G$ and the corresponding graphs $\mathtt{S}(G)$, $\mathtt{M}(G)$, and $\mathtt{C}(G)$, respectively.

\vspace{.1cm}

\noindent
In recent years, the study of domination-related parameters in the aforementioned graph operators has received increasing attention. In particular, several studies have investigated domination \cite{Barish,Dom-S-ACM,M-dom}, total domination \cite{Dom-S-ACM,C-total,M-total}, independent domination \cite{ind-midd,Ind-Central,Dom-S-ACM}, and double (total) domination \cite{DD-2026,DTD-2026}, among other parameters. We continue this line of research by considering two further domination parameters, namely semitotal domination and independent semitotal domination.

\begin{figure}[ht]
\centering
\begin{tikzpicture}[scale=.37, transform shape]	
	
\node [draw, shape=circle] (a1) at  (1.5,0) {};
\node [draw, shape=circle] (a2) at  (4.5,0) {};
\node [draw, shape=circle] (a3) at  (4.5,3) {};
\node [draw, shape=circle] (a4) at  (3,5) {};
\node [draw, shape=circle] (a5) at  (1.5,3) {};
			
\draw(a1)--(a5)--(a4)--(a3)--(a2);
\draw (a1)--(a3);
\draw (a2)--(a5);

			
\node [draw, shape=circle] (b1) at  (8,0) {};
\node [draw, shape=circle] (b2) at  (11,0) {};
\node [draw, shape=circle] (b3) at  (11,3) {};
\node [draw, shape=circle] (b4) at  (9.5,5) {};
\node [draw, shape=circle] (b5) at  (8,3) {};

\node [draw, shape=circle] (b51) at  (14,-2.5) {};
\node [draw, shape=circle] (b13) at  (14,-0.5) {};
\node [draw, shape=circle] (b23) at  (14,1.5) {};
\node [draw, shape=circle] (b25) at  (14,3.5) {};
\node [draw, shape=circle] (b34) at  (14,5.5) {};
\node [draw, shape=circle] (b45) at  (14,7.5) {};
			
\draw (b5) to[bend left=-25] (b51);
\draw (b51) to[bend left=15] (b1);
			
\draw (b1) to[bend left=-15] (b13);
\draw (b13) to[bend left=15] (b3);
			
\draw (b2) to[bend left=-10] (b23);
\draw (b23) to[bend left=-10] (b3);
			
\draw (b2) to[bend left=15] (b25);
\draw (b25) to[bend left=-15] (b5);
			
\draw (b3) to[bend left=10] (b34);
\draw (b34) to[bend left=-10] (b4);
			
\draw (b4) to[bend left=15] (b45);
\draw (b45) to[bend left=-40] (b5);

			
\node [draw, shape=circle] (c1) at  (18,0) {};
\node [draw, shape=circle] (c2) at  (21,0) {};
\node [draw, shape=circle] (c3) at  (21,3) {};
\node [draw, shape=circle] (c4) at  (19.5,5) {};
\node [draw, shape=circle] (c5) at  (18,3) {};
			
\node [draw, shape=circle] (c51) at  (24,-2.5) {};
\node [draw, shape=circle] (c13) at  (24,-0.5) {};
\node [draw, shape=circle] (c23) at  (24,1.5) {};
\node [draw, shape=circle] (c25) at  (24,3.5) {};
\node [draw, shape=circle] (c34) at  (24,5.5) {};
\node [draw, shape=circle] (c45) at  (24,7.5) {};

\draw (c5) to[bend left=-25] (c51);
\draw (c51) to[bend left=15] (c1);
		
\draw (c1) to[bend left=-15] (c13);
\draw (c13) to[bend left=15] (c3);
			
\draw (c2) to[bend left=-10] (c23);
\draw (c23) to[bend left=-10] (c3);
			
\draw (c2) to[bend left=15] (c25);
\draw (c25) to[bend left=-15] (c5);
			
\draw (c3) to[bend left=10] (c34);
\draw (c34) to[bend left=-10] (c4);
			
\draw (c4) to[bend left=15] (c45);
\draw (c45) to[bend left=-40] (c5);

\draw (c51) to[bend left=-15] (c13);
\draw (c51) to[bend left=-30] (c25);
\draw (c51) to[bend left=-35] (c45);
			
\draw (c13) to[bend left=-15] (c23);
\draw (c13) to[bend left=-30] (c34);
			
\draw (c25) to[bend left=-30] (c45);
\draw (c25) to[bend left=5] (c23);
			
\draw (c23) to[bend left=-25] (c34);
			
\draw (c34) to[bend left=-15] (c45);

			
\node [draw, shape=circle] (d1) at  (29,0) {};
\node [draw, shape=circle] (d2) at  (32,0) {};
\node [draw, shape=circle] (d3) at  (32,3) {};
\node [draw, shape=circle] (d4) at  (30.5,5) {};
\node [draw, shape=circle] (d5) at  (29,3) {};
			
\draw (d1)--(d2);
\draw (d1)--(d4);
\draw (d4)--(d2);
\draw (d3)--(d5);
			
\node [draw, shape=circle] (d51) at  (35,-2.5) {};
\node [draw, shape=circle] (d13) at  (35,-0.5) {};
\node [draw, shape=circle] (d23) at  (35,1.5) {};
\node [draw, shape=circle] (d25) at  (35,3.5) {};
\node [draw, shape=circle] (d34) at  (35,5.5) {};
\node [draw, shape=circle] (d45) at  (35,7.5) {};

\draw (d5) to[bend left=-25] (d51);
\draw (d51) to[bend left=15] (d1);
			
\draw (d1) to[bend left=-15] (d13);
\draw (d13) to[bend left=15] (d3);
			
\draw (d2) to[bend left=-10] (d23);
\draw (d23) to[bend left=-10] (d3);
			
\draw (d2) to[bend left=15] (d25);
\draw (d25) to[bend left=-15] (d5);
			
\draw (d3) to[bend left=10] (d34);
\draw (d34) to[bend left=-10] (d4);
			
\draw (d4) to[bend left=15] (d45);
\draw (d45) to[bend left=-40] (d5);
			
\end{tikzpicture}
\caption{From left to right, a graph $G$ and its corresponding graphs $\mathtt{S}(G)$, $\mathtt{M}(G)$, and  $\mathtt{C}(G)$, respectively.}\label{fig-RMS}
\end{figure}

\noindent  
A set $D \subseteq V(G)$ is said to be a \emph{dominating set} of $G$ if $N_G(v_i)\cap D\neq \emptyset$ for every vertex $v_i\in V(G)\setminus D$.
The \emph{domination number} of $G$, denoted by $\gamma(G)$, is  the minimum cardinality among all dominating sets of $G$. A dominating set $D$ satisfying $|D|=\gamma(G)$ is called a $\gamma(G)$-\emph{set}. The same convention will be followed for optimal sets associated with the other parameters and families of sets considered throughout the paper.
For comprehensive surveys on domination in graphs, we refer the reader to~\cite{Haynes1998a,Haynes1998}.
The notion of semitotal domination can be viewed as a natural refinement of classical domination, by imposing an additional condition on the vertices belonging to the dominating set.
A \emph{semitotal dominating set} (SDS) of $G$ is a dominating set $D$ such that every vertex in $D$ is at distance at most two from another vertex in $D$.  
The \emph{semitotal domination number} of $G$, denoted by $\gamma_{t2}(G)$, is  the minimum cardinality among all SDSs of $G$. This parameter was introduced by Goddard et al. in~\cite{GoddardSemiTotal} and has subsequently been studied in several works, including~\cite{std-cite-1,std-cite-2,std-cite-4,std-cite-6}. 

\vspace{.1cm}

\noindent
We next consider the independent counterparts of the domination parameters introduced above. A dominating set $D\subseteq V(G)$ is said to be \emph{independent} if no two vertices in $D$ are adjacent. An \emph{independent dominating set} (IDS) is thus a dominating set that is also independent. The \emph{independent domination number} of $G$, denoted by $i(G)$, is the minimum cardinality among all independent dominating sets of $G$. 
An \emph{independent semitotal dominating set} (ISDS) of $G$ is a semitotal dominating set that is also independent. The \emph{independent semitotal domination number} of $G$, denoted by $i_{t2}(G)$, is the minimum cardinality among all ISDSs of $G$. This parameter was introduced in~\cite{ISD-1} and has subsequently been studied in, among others, \cite{ISD-2027,CLS2026,ISD-2}.

\vspace{.1cm}

\noindent
As previously stated, our goal is to study semitotal domination and independent semitotal domination in the subdivision, middle, and central graphs of a graph $G$. The paper is organized as follows. In Subsection~\ref{sub-1}, we introduce the general notation and terminology, as well as the basic tools needed throughout the paper.
In Section~\ref{section-SM}, we show that $\gamma_{t2}(\mathtt{S}(G))=i_{t2}(\mathtt{S}(G))$ and $\gamma_{t2}(\mathtt{M}(G))=i_{t2}(\mathtt{M}(G))$, and we derive closed formulas for these parameters in terms of invariants of $G$. In Section~\ref{section-C}, we investigate these parameters for the central graph $\mathtt{C}(G)$. In particular, we obtain a closed formula for $\gamma_{t2}(\mathtt{C}(G))$ in terms of the vertex covering number of $G$. Finally, we prove that
$i_{t2}(\mathtt{C}(G)) \in \{i(\mathtt{C}(G)), i(\mathtt{C}(G)) + 1\}$,
and provide examples of families of graphs attaining each of the two possible values.

\subsection{Notation, terminology and useful results}\label{sub-1}

We use the following notation and terminology throughout the paper.
For each $i\in[n]=\{1,\ldots,n\}$, let $N_G(v_i)=\{v_j\in V(G): v_iv_j\in E(G)\}$ and $N_G[v_i]=N_G(v_i)\cup\{v_i\}$ denote the open and closed neighborhoods of $v_i$ in $G$, respectively. A \emph{leaf} of $G$ is a vertex $v_i$ with $|N_G(v_i)|=1$.
If $|N_G(v_i)|=k$ for every $i\in [n]$, then $G$ is called $k$-regular.
For any two vertices $v_i,v_j\in V(G)$, the distance between $v_i$ and $v_j$ in $G$, denoted by $d_G(v_i,v_j)$, is  the length of a shortest $v_i-v_j$ path in $G$. 
For a set~$D\subseteq V(G)$, we denote by $G[D]$ the subgraph of $G$ induced by $D$, and by $G-D$ the subgraph induced by $V(G)\setminus D$.  
We use $K_n$, $C_n$ and $P_n$ to denote the complete graph, the cycle graph and the path graph of order $n$, respectively. Given a graph $G$, the graph $K_1+G$ is the graph with vertex set $V(K_1+G)=V(K_1)\cup V(G)$ and edge set $E(K_1+G)=E(G)\cup \{v_iv_j: v_i\in V(K_1), v_j\in V(G)\}$.
We next introduce the following graph invariants, which will be used throughout the paper.
\begin{itemize}
\item A \emph{vertex cover} of a graph $G$ with at least one edge is a set $D\subseteq V(G)$ such that every edge of $G$ is incident with at least one vertex in $D$. The \emph{vertex covering number} of $G$, denoted by $\beta(G)$, is the minimum cardinality among all vertex covers of $G$.
\item An \emph{edge cover} of graph $G$ with no isolated vertex is a set $F\subseteq E(G)$ such that every vertex of $G$ is incident with at least one edge in $F$. The \emph{edge covering number} of $G$, denoted by $\beta'(G)$, is the minimum cardinality among all edge covers of $G$.
\item A \emph{clique} is a complete subgraph of $G$. The \emph{clique number} of $G$, denoted by $\omega(G)$, is the maximum order among all cliques in $G$. A clique of order three is called a \emph{triangle}, and a graph is said to be \emph{triangle-free} if it contains no triangle. 
\item Given a nontrivial connected graph $G$ of order $n$, let $\mathcal{P}_3(G)$ be the family of sets $D\subseteq V(G)$ such that $G[D]$ is isomorphic to a disjoint union of copies of $P_3$, and every isolated vertex of $G[V(G)\setminus D]$, if any, is a leaf of $G$. Define
$$p_3(G) = \begin{cases} 
0 & \text{if } \mathcal{P}_3(G)=\emptyset, \\
\max\{|D|/3 : D \in \mathcal{P}_3(G)\} & \text{otherwise}.
\end{cases}$$
For instance, $p_3(C_4)=0$ and $p_3(T)>0$ for every tree $T$ of order at least three. If $\mathcal{P}_3(G)\neq \emptyset$, then a set $D\in \mathcal{P}_3(G)$ is called a $p_3(G)$-set if $|D|=3p_3(G)$. 
\end{itemize}

\noindent
We conclude this subsection with two useful theorems. The first one combines two results obtained in \cite{ind-midd} and \cite{M-dom}. The second one, due to Cabrera-Mart\'inez et al.~\cite{Ind-Central},  establishes closed formulas for the independent domination number of $\mathtt{C}(G)$ when $G$ is triangle-free and noncomplete $k$-regular.

\begin{theorem}[\cite{ind-midd} and\cite{M-dom}]\label{teo-domM}
Let $G$ be a nontrivial connected graph. Then
$$\gamma(\mathtt{M}(G))=i(\mathtt{M}(G))= \beta'(G).$$
\end{theorem}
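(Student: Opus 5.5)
We prove $\gamma(\mathtt{M}(G))\ge \beta'(G)$ and $i(\mathtt{M}(G))\le \beta'(G)$. Together with the trivial inequality $\gamma\le i$, these give the chain $\beta'(G)\le\gamma(\mathtt{M}(G))\le i(\mathtt{M}(G))\le\beta'(G)$, so all three quantities coincide.

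For the upper bound, take a minimum edge cover $F$ of $G$. It exists because $G$ is connected and nontrivial, so $G$ has no isolated vertices. We may choose $F$ so that every component of $G[F]$ is a star; this is the standard fact that minimality forbids an edge whose two endpoints are both covered by other edges of $F$. Set $D=\{v^{i,j}: v_iv_j\in F\}$.

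\begin{itemize}
\item Every $v_i\in V(G)$ is dominated, since some edge of $F$ is incident with $v_i$.
\item Every $v^{i,j}\notin D$ is dominated: $v_i$ is covered by some $e\in F$, and $e$ shares the endpoint $v_i$ with $v_iv_j$, so $v^e$ is adjacent to $v^{i,j}$ in $\mathtt{L}(G)$.
\end{itemize}

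So $D$ dominates $\mathtt{M}(G)$. Independence is the delicate point, because two edges of a star share a vertex and are therefore adjacent in $\mathtt{L}(G)$. Hence $D$ as defined is not independent in general, and the construction must be modified. I would take a maximum matching $M$ of $G$ and, for each vertex $u$ unsaturated by $M$, pick a vertex $w_u$ adjacent to $u$. Then I would set
\[
D=\{v^{e}: e\in M\}\cup\{u: u \text{ unsaturated by } M\}.
\]
The unsaturated vertices form an independent set in $G$, by maximality of $M$. In $\mathtt{M}(G)$ they are not adjacent to one another (original vertices are never adjacent there), and none of them is adjacent to a matched edge-vertex $v^{e}$, since $u\notin e$. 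The vertices $v^{e}$, $e\in M$, are pairwise nonadjacent because $M$ is a matching. Thus $D$ is independent.

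We check that $D$ dominates:
\begin{itemize}
\item Each saturated original vertex is adjacent to its matching edge-vertex.
\item Each edge-vertex $v^{i,j}$ not in $D$ has an endpoint that is either saturated, in which case it shares that endpoint with a matching edge, or unsaturated, in which case that endpoint lies in $D$ and is adjacent to $v^{i,j}$. Both endpoints cannot be unsaturated, since $M$ is maximal.
\end{itemize}
Finally, $|D|=|M|+(n-2|M|)=n-|M|$, which equals $\beta'(G)$ by Gallai's theorem. Hence $i(\mathtt{M}(G))\le\beta'(G)$.

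For the lower bound, take a $\gamma(\mathtt{M}(G))$-set $S$ and convert it into an edge cover of $G$ of size at most $|S|$. Keep each $v^{i,j}\in S$ as the edge $v_iv_j$. Replace each $v_i\in S$ by an arbitrary edge incident with $v_i$. Every $v_i\notin S$ is dominated by some $v^{i,j}\in S$, and the edge $v_iv_j$ covers $v_i$. So the resulting edge set covers every vertex of $G$ and has size at most $|S|$. This gives $\beta'(G)\le\gamma(\mathtt{M}(G))$.

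The main obstacle is the independence in the upper bound: the naive edge-cover set fails to be independent, and switching to the maximum-matching construction together with Gallai's identity $\beta'(G)=n-\alpha'(G)$ is what resolves it.
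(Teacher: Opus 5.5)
Your proof is correct. The paper gives no proof of this statement: it quotes $\gamma(\mathtt{M}(G))=\beta'(G)$ and $i(\mathtt{M}(G))=\beta'(G)$ from the two cited works and uses their combination as a black box. Your sandwich $\beta'(G)\le\gamma(\mathtt{M}(G))\le i(\mathtt{M}(G))\le\beta'(G)$ is therefore a self-contained alternative, and it shows the result is elementary. Both halves check out:
\begin{itemize}
\item The lower bound only needs $N_{\mathtt{M}(G)}(v_i)=\{v^{i,j}: v_j\in N_G(v_i)\}$ and the absence of isolated vertices in $G$.
\item The set $\{v^e: e\in M\}\cup\{u: u \text{ unsaturated by } M\}$ is indeed independent and dominating in $\mathtt{M}(G)$, of size $n-\alpha'(G)=\beta'(G)$ by Gallai's identity.
\end{itemize}

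Some of the write-up is dead weight: the abandoned star-forest cover, the vertices $w_u$ that are never used, and the remarks that unsaturated vertices are independent in $G$ and that no edge has two unsaturated endpoints. Original vertices are never adjacent in $\mathtt{M}(G)$, and either endpoint of $v^{i,j}$ suffices for domination.

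In fact your first attempt can be repaired directly, with no appeal to Gallai. Take a star in a minimum edge cover $F$, with center $c$ and leaves $\ell_1,\dots,\ell_t$. Keep $v^{c,\ell_1}$ and replace each $v^{c,\ell_s}$ ($2\le s\le t$) by the leaf $\ell_s$. Since the stars are vertex-disjoint, this gives an independent dominating set of size exactly $|F|=\beta'(G)$. Your matching set arises in the same way from Gallai's minimum cover $M\cup\{uw_u\}$, by replacing each $v^{u,w_u}$ with $u$. That is presumably what the $w_u$ were meant for.
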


\begin{theorem}{\rm \cite{Ind-Central}}\label{theo-triangle-k-regunlar}
Let $G$ be a connected graph of order $n\geq 3$ and size $m$. The following statements hold.
\begin{enumerate}
\item[{\rm (i)}] If $G$ is a triangle-free graph, then $$i (\mathtt{C}(G))=m+3-\max\{|N_G(v_i)|+|N_G(v_j)|:v_iv_j\in E(G)\}.$$
\item[{\rm (ii)}] If $G$ is a $k$-regular graph, distinct from a complete graph, then $$i (\mathtt{C}(G))=\frac{nk+\omega(G)^2-(2k-1)\omega(G)}{2}.$$
\end{enumerate}
\end{theorem}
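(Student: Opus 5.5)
The plan is to turn the problem in $\mathtt{C}(G)$ into an optimization over cliques of $G$. This works because the vertices of $V(G)$ are adjacent in $\mathtt{C}(G)$ exactly when they are nonadjacent in $G$, while the vertices of $V_E(G)$ are pairwise nonadjacent.

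\textbf{Step 1: structure of an IDS.} Let $D$ be an IDS of $\mathtt{C}(G)$, and write $A=D\cap V(G)$ and $B=D\cap V_E(G)$. Since $A$ is independent in $\mathtt{C}(G)$, it is a clique of $G$ (possibly empty).
\begin{itemize}
\item If $v^{i,j}$ has an endpoint in $A$, then $v^{i,j}\notin B$ by independence.
\item If $v^{i,j}$ has no endpoint in $A$, then $v^{i,j}$ can only be dominated by itself, so $v^{i,j}\in B$.
\end{itemize}
Thus $B$ is exactly the set of subdivision vertices of edges of $G-A$. Counting edges of $G$ incident with $A$ as $\sum_{v_i\in A}|N_G(v_i)|-\binom{|A|}{2}$, we get
$$|D|=m+|A|-\sum_{v_i\in A}|N_G(v_i)|+\frac{|A|(|A|-1)}{2}.$$

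\textbf{Step 2: the domination condition.} It remains to decide when this $D$ dominates $V(G)\setminus A$. A vertex $v\notin A$ is dominated if it is nonadjacent in $G$ to some vertex of $A$, or if it is incident with an edge of $G-A$. So $D$ is an IDS if and only if every $v\notin A$ that is adjacent in $G$ to all of $A$ has a neighbour outside $A$. In particular $A=\emptyset$ is always feasible, since $G$ is connected and $n\ge 3$, and gives $|D|=m$. Hence $i(\mathtt{C}(G))$ is the minimum of the displayed expression over feasible cliques $A$.

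\textbf{Step 3, case (i): $G$ triangle-free.} Here $|A|\le 2$.
\begin{itemize}
\item For $A=\{v_i,v_j\}$ with $v_iv_j\in E(G)$, the condition in Step~2 is vacuous because no vertex is adjacent to both. The value is $m+3-|N_G(v_i)|-|N_G(v_j)|$.
\item For $A=\{v_i\}$ the value is $m+1-|N_G(v_i)|$. This is never better than the edge $v_iv_j$ whenever some neighbour $v_j$ has degree at least $2$.
\item If all neighbours of $v_i$ are leaves, then $G$ is a star with centre $v_i$. In that case $A=\{v_i\}$ is infeasible, because the leaves are adjacent to $v_i$ and have no other neighbour.
\item The value $m$ for $A=\emptyset$ is also dominated, since $|N_G(v_i)|+|N_G(v_j)|\ge 3$ for some edge when $n\ge 3$.
\end{itemize}
Taking the minimum over edges gives the formula in (i).

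\textbf{Step 3, case (ii): $G$ $k$-regular and noncomplete.} With $|A|=a$ the value becomes
$$\frac{nk}{2}+\frac{a^2-(2k-1)a}{2}.$$
This is nonincreasing in $a$ for $0\le a\le k$. Since $G$ is connected and noncomplete, $\omega(G)\le k$. Choosing $A$ to be a maximum clique is feasible: a vertex outside $A$ adjacent to all of $A$ would give a larger clique, so the condition in Step~2 holds. The minimum is therefore attained at $a=\omega(G)$, which gives (ii).

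The main point needing care is the feasibility analysis in Step~2, namely the star case in (i) and the maximality argument in (ii). The counting itself is routine.
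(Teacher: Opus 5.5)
Your proof is correct and complete. The paper does not prove this statement: it imports it from \cite{Ind-Central} as a tool, so there is no in-paper argument to compare yours against.

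Your route is self-contained, and each step checks out.
\begin{itemize}
\item An IDS $D$ of $\mathtt{C}(G)$ is determined by the clique $A=D\cap V(G)$, through $D\cap V_E(G)=V_E(G-A)$.
\item Such a set dominates $V(G)\setminus A$ exactly when no vertex $v\notin A$ satisfies $N_G(v)=A$. This is your Step~2 condition, restated.
\item This is the structure the paper itself uses elsewhere: that $G[I\cap V(G)]$ is a clique, and the set $B=\{v_i\in V(G)\setminus I : N_G(v_i)=I\cap V(G)\}$ in Subcase~2.2 of Theorem~\ref{theo-K1+G}.
\end{itemize}

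What your approach buys is the explicit cost
$$|D|=m+|A|-\sum_{v_i\in A}|N_G(v_i)|+\frac{|A|(|A|-1)}{2}$$
over feasible cliques. This turns both parts into short optimizations and shows exactly where each hypothesis enters.
\begin{itemize}
\item In (i), triangle-freeness caps $|A|\le 2$ and makes every edge feasible. Singletons are beaten by an incident edge unless $G$ is a star centred at that vertex, in which case the singleton is infeasible. The empty set is beaten because, with $n\ge 3$, some edge has degree sum at least $3$.
\item In (ii), regularity makes the cost depend only on $|A|$. The cost is nonincreasing on $0\le |A|\le k$, and $\omega(G)\le k$ for connected noncomplete $G$. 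A maximum clique is feasible by maximality.
\end{itemize}
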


\section{Subdivision and middle graphs}\label{section-SM}

We begin with the subdivision graphs. The following theorem shows that the semitotal domination number and the independent semitotal domination number of $\mathtt{S}(G)$ coincide, and expresses their common value in terms of the order of $G$ and the parameter $p_3(G)$.

\begin{theorem}
Let $G$ be  a nontrivial connected graph of order $n$. Then 
$$\gamma_{t2}(\mathtt{S}(G))=i_{t2} (\mathtt{S}(G)) = n- p_3(G).$$
\end{theorem}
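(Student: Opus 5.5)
The proof shows $\gamma_{t2}(\mathtt{S}(G))\le i_{t2}(\mathtt{S}(G))\le n-p_3(G)$ by a construction, and $\gamma_{t2}(\mathtt{S}(G))\ge n-p_3(G)$ by analysing a minimum semitotal dominating set. Since $\gamma_{t2}\le i_{t2}$ always holds, the three quantities then coincide. Throughout I use that $\mathtt{S}(G)$ is bipartite with parts $V(G)$ and $V_E(G)$. Two original vertices are at distance two exactly when they are adjacent in $G$, and two subdivision vertices are at distance two exactly when the corresponding edges share an endpoint.

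\emph{Upper bound.} If $\mathcal{P}_3(G)=\emptyset$, take $S=V(G)$. It is independent, and it dominates $V_E(G)$. Since $G$ is connected and nontrivial, every $v_i$ has a neighbour $v_j$ in $G$, hence $d_{\mathtt{S}(G)}(v_i,v_j)=2$. So $S$ is an ISDS of size $n$.

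Otherwise let $D$ be a $p_3(G)$-set, and build $S$ as follows.
\begin{enumerate}
\item For each copy $a\,b\,c$ of $P_3$ in $G[D]$, put $v^{a,b}$ and $v^{b,c}$ in $S$.
\item Put every non-isolated vertex of $G[V(G)\setminus D]$ in $S$.
\item For every isolated vertex $x$ of $G[V(G)\setminus D]$, which is a leaf of $G$ with unique neighbour $a\in D$, put $v^{x,a}$ in $S$. Taking $x$ itself would fail: $x$ would be at distance $3$ from the chosen subdivision vertices and at distance at least $4$ from every other original vertex in $S$.
\end{enumerate}
Each $P_3$-copy contributes $2$ vertices and every vertex outside $D$ contributes exactly one, so $|S|=2p_3(G)+(n-3p_3(G))=n-p_3(G)$.

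Now I check the conditions.
\begin{itemize}
\item \emph{Independence.} No chosen subdivision vertex has an endpoint in $S$: the endpoints of the subdivision vertices chosen in steps 1 and 3 lie in $D$ or are isolated vertices of $G[V(G)\setminus D]$, and neither kind is put in $S$.
\item \emph{Domination of original vertices.} Every vertex of $D$ is dominated in step 1. Every isolated vertex of $G[V(G)\setminus D]$ is dominated in step 3.
\item \emph{Domination of subdivision vertices.} An edge with an endpoint among the vertices added in step 2 is dominated by that endpoint. An edge with both ends in $D$ is one of the chosen $P_3$-edges, because $G[D]$ is an induced disjoint union of $P_3$'s. An edge $xa$ with $x$ an isolated leaf is itself chosen.
\item \emph{Semitotal condition.} Vertices added in step 2 have a neighbour in $G$ that is also in $S$. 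The two subdivision vertices of a $P_3$-copy are at distance $2$ from each other. Each $v^{x,a}$ is at distance $2$ from a $P_3$-subdivision vertex at $a$.
\end{itemize}

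\emph{Lower bound.} Let $S$ be a $\gamma_{t2}(\mathtt{S}(G))$-set. Write $A=S\cap V(G)$, $B=S\cap V_E(G)$, and let $H$ be the subgraph of $G$ formed by the edges corresponding to $B$. Every vertex of $V(G)\setminus A$ is dominated, so it lies in $V(H)$. Hence
\[
n-|S| \;\le\; |V(H)\setminus A|-|E(H)| \;\le\; \sum_{\text{components } K \text{ of } H}\bigl(|V(K)|-|E(K)|\bigr),
\]
and a component contributes positively only if it is a tree, and then contributes at most $1$.

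A component that is a single edge $xy$ with $x,y\notin A$ is impossible. The semitotal condition forces $v^{x,y}$ to have a partner at distance $2$. That partner is either another subdivision vertex at $x$ or $y$, which would lie in the same component, or $x$ or $y$ itself, which we assumed are not in $A$. Hence each gaining component is a tree with at least two edges.

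For each such tree, I replace its edge set in $S$ by a single path $a\,b\,c$ inside the tree and add the remaining tree vertices to $A$. This does not change $|S|$. After this, $n-|S|$ is at most the number of $P_3$'s, and the goal is to exhibit a set in $\mathcal{P}_3(G)$ with that many copies.

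\emph{Main obstacle.} The hard part is making the family of paths induced and vertex-disjoint, and ensuring that isolated vertices of $G[V(G)\setminus D]$ are leaves. I would handle this by exchange arguments on a minimum set $S$ chosen to maximise the number of $P_3$-components.
\begin{itemize}
\item If extra $G$-edges join two paths, or close a path into a triangle, then the union is not a tree. The existing counting then shows we could save a vertex or merge components, contradicting the choice of $S$.
\item If some $x\notin D$ is isolated in $G[V(G)\setminus D]$ and is not a leaf, then $x$ is adjacent in $G$ only to vertices of $D$. I would reroute $x$ into a neighbouring $P_3$-structure, or show that $S$ can be shrunk, again contradicting minimality or the choice of $S$.
\end{itemize}
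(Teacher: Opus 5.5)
Your upper bound is correct and matches the paper's construction. The lower bound, however, is not established. The step that carries it is exhibiting a member of $\mathcal{P}_3(G)$ with as many $P_3$-copies as there are gaining components, and you only announce it (``I would reroute $x$ \ldots\ or show that $S$ can be shrunk'').

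The normalisation that your exchange plan rests on is also false. Replacing a gaining tree's edges by one path $a\,b\,c$ and moving the other tree vertices into $A$ need not give a semitotal dominating set. For $G=K_{1,3}$ with centre $b$ and leaves $a,c,w$, the set $S=V_E(G)$ is a $\gamma_{t2}(\mathtt{S}(G))$-set. Yet in $\{v^{a,b},v^{b,c},w\}$ the vertex $w$ is at distance $3$ from both other vertices, whichever path you choose.

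Your displayed chain also drops $A$ too early. The correct identity is $n-|S|=\sum_K\bigl(|V(K)\setminus A|-|E(K)|\bigr)$, so a tree component contributes $1-|V(K)\cap A|$. With $\sum_K(|V(K)|-|E(K)|)$ as written, a single edge $xa$ with $a\in A$ would count as gaining, and your single-edge exclusion does not cover that case.

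The missing observation makes exchange arguments unnecessary. Since $v^{x,y}$ is adjacent only to $x$ and $y$, every edge of $G-A$ lies in $B$. Hence each gaining component $T$ (a tree disjoint from $A$ with at least three vertices) is an induced subgraph of $G$ and a full component of $G-A$. Distinct gaining components are therefore nonadjacent, so your first bullet never arises.

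Let $k$ be the number of gaining components. Choose any path $P_3$ in each $T$ and let $D$ be the union of their vertex sets. Then every isolated vertex of $G-D$ is a leaf of $G$:
\begin{enumerate}
\item A vertex of $T$ isolated in $G-D$ has all its neighbours on the chosen path of $T$. By acyclicity it has exactly one, so it is a leaf of $G$.
\item A vertex $a\in A$ has a partner within distance two: either a $G$-neighbour in $A$, or some $v^{a,x}\in B$ with $x$ in the non-gaining component of $H$ containing $a$. Either way $a$ has a neighbour outside $D$, so it is not isolated.
\item A vertex $x\notin A$ outside the gaining components is dominated by some $v^{x,y}\in B$, with $y$ in the same non-gaining component. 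Again $y\notin D$, so $x$ is not isolated.
\end{enumerate}
Thus $D\in\mathcal{P}_3(G)$ and $p_3(G)\ge k\ge n-|S|$.

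Completed this way, your component count gives a proof without exchanges. The paper instead chooses a $\gamma_{t2}$-set maximising $|D\cap V(G)|$ and uses exchange arguments to show that the components of $G[V(G)\setminus D]$ are trees of order at least three. As submitted, though, your lower bound is a plan rather than a proof.
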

	
\begin{proof}
It is easy to check that, if $n \in \{2,3\}$, then $\gamma_{t2}(\mathtt{S}(G))=i_{t2} (\mathtt{S}(G)) = n- p_3(G)$. Henceforth, assume that $n\geq 4$. First, we proceed to prove that $i_{t2} (\mathtt{S}(G)) \leq n - p_3(G)$.
Observe that $V(G)$ is an ISDS of $\mathtt{S}(G)$. If $p_3(G) = 0$, then it follows that $i_{t2} (\mathtt{S}(G)) \leq |V(G)|=n- p_3(G)$, as desired. Thus, we may assume that $p_3(G) >0$. Let $S$ be a $p_3(G)$-set, and let $I = \{v_i \in V(G) \setminus S :\ N_G(v_i) \subseteq S\}$. By definition, the vertices in $I$ are leaves. Since $G$ is a connected graph of order at least four, we have that $V(G) \setminus S \neq \emptyset$. Let $I' \subseteq V_E(G)$ be a set of minimum cardinality such that $N_{\mathtt{S}(G)} (v_i) \cap I' \neq \emptyset$ for every vertex $v_i \in I$. Observe that $|I| = |I'|$ and $|V_E(G[S])| = 2|S|/3$. 
We claim that $X = V_E(G[S]) \cup I' \cup (V(G) \setminus (S\cup I))$ is an ISDS of $\mathtt{S}(G)$. Since the vertices in $I$ are leaves, it is easy to check that $X$ is an IDS of $\mathtt{S}(G)$. Thus, it remains to show that, for every $x\in X$, there exists $y\in X\setminus \{x\}$ such that $d_{\mathtt{S}(G)} (x,y)= 2$. First, let $v_i \in V(G) \setminus (S\cup I)$. 
Since $v_i\notin I$, then there exists a vertex $v_j \in V(G) \setminus (S\cup I)\subseteq X$ such that $v_iv_j \in E(G)$. Hence, $d_{\mathtt{S}(G)}(v_i,v_j) = 2$. 
Next, suppose that $v^{i,j} \in I'$. By the definition of $I'$, we have that $\{v_i,v_j\}\cap S\neq\emptyset$. Without loss of generality, assume that $v_i \in S$. Since every component of $G[S]$ has order at least three, there exists a vertex $v_k \in S$ such that $v_iv_k \in E(G)$. Therefore, $v^{i,k}\in V_E(G[S])\subseteq X$, which implies that $d_{\mathtt{S}(G)} (v^{i,j} , v^{i,k})= 2$. 
Finally, let $v^{i,j} \in V_E(G[S])$. Since $G[S]$ contains disjoint paths $P_3$, there  exists $v_k\in S \setminus \{v_j\}$ such that $v_iv_k\in E(G)$. Thus, $v^{i,k}\in V_E(G[S])\subseteq X$ and consequently, $d_{\mathtt{S}(G)} (v^{i,j}, v^{i,k})= 2$.
Therefore, every vertex in $X$ has another vertex in $X$ at distance two in $\mathtt{S}(G)$, and hence $X$ is an ISDS of $\mathtt{S}(G)$. Consequently, 
$$i_{t2} (\mathtt{S}(G)) \leq  |V_E(G[S]) \cup I' \cup (V(G) \setminus (S\cup I))|=\frac{2|S|}{3} + |I'|+ n -|S| - |I| = n - p_3(G),$$
as desired. Since $\gamma_{t2} (\mathtt{S}(G))\leq i_{t2} (\mathtt{S}(G))$, it remains only to prove that $\gamma_{t2} (\mathtt{S}(G)) \geq n - p_3(G)$. Let $D$ be a $\gamma_{t2} (\mathtt{S}(G))$-set such that $|D\cap V(G)|$ is maximum. We consider the following two complementary cases.
		
\vspace{.2cm}
		
\noindent
Case 1: $V(G)\subseteq D$ or $D\cap V(G) = \emptyset$. 
If $V(G)\subseteq D$, then $\gamma_{t2}(\mathtt{S}(G))=|D|\geq n\geq n-p_3(G)$, and the desired inequality follows.
Suppose that $D\cap V(G)=\emptyset$. This implies that $D=V_E(G)$. If $G$ is not a tree, then $\gamma_{t2} (\mathtt{S}(G)) = |D|=|V_E(G)|\geq n=n-p_3(G)$. If $G$ is a tree, then, since $p_3(G)>0$ and $|V_E(G)|\geq n-1$, it follows that $\gamma_{t2} (\mathtt{S}(G))=|D|=|V_E(G)| \geq n-1 \geq n-p_3(G)$, as desired. 
		
\vspace{.2cm}
		
\noindent
Case 2: $D\cap V(G) \neq \emptyset$ and $V(G) \setminus D\neq \emptyset$. Observe that $V_E(G[V(G) \setminus D]) \subseteq D$. If there exists a component $H$ of $G[V(G) \setminus D]$ such that $|V_E(H)| \geq |V(H)|$, then $D' = (D\setminus V_E(H)) \cup V(H)$ is an SDS of $\mathtt{S}(G)$ with $|D'| \leq |D|$ and $|D' \cap V(G)| > |D\cap V(G)|$, which is a contradiction. Hence, each component of $G[V(G) \setminus D]$ is a tree.
If there exists a component $T$ of order at most two, then $T$ has a vertex $v_i$ which satisfies that $v^{i,j}\in D$ for some $v_j\in V(G)\cap D$. In such a case, we have that $D'' = (D\setminus \{v^{i,j}\}) \cup \{v_i\}$ is an SDS of $\mathtt{S}(G)$ with $|D''|=|D|$ and $|D'' \cap V(G)| > |D\cap V(G)|$, a contradiction too. Therefore, every component $T$ of $G[V(G) \setminus D]$ satisfies that $|V(T)|\geq 3$, which implies that $p_3(T) \geq 1$. If $T_1,\ldots , T_k$ are the components of $G[V(G) \setminus D]$, then $p_3(G)\geq \sum_{i\in [k]}p_3(T_i)\geq k$. Since $V_E(G[V(G) \setminus D]) \subseteq D\cap V_E(G)$, it follows that
\begin{equation*}
|D\cap V_E(G)|\geq |V_E(G[V(G) \setminus D])|
	=\sum_{i\in [k]}|V_E(T_i)|=\sum_{i\in [k]}|V(T_i)|-k\geq |V(G)\setminus D|-p_3(G).
\end{equation*}
Therefore,
$\gamma_{t2} (\mathtt{S}(G)) = |D\cap V(G) | + |D\cap V_E(G)| \geq |D\cap V(G)| + |V(G) \setminus D|-p_3(G)=n-p_3(G),$ which completes the proof.
\end{proof}

\noindent
We next turn our attention to the middle graph. The following theorem shows that the semitotal domination number and the independent semitotal domination number of $\mathtt{M}(G)$ coincide, and expresses their common value in terms of the edge covering number of $G$.

\begin{theorem}
Let $G$ be a connected graph of order at least three. Then
$$\gamma_{t2} (\mathtt{M}(G))=i_{t2} (\mathtt{M}(G)) = \beta'(G).$$
\end{theorem}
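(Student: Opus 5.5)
The plan is to sandwich both parameters between $\beta'(G)$ and $\beta'(G)$, using Theorem~\ref{teo-domM}. For the lower bound, every SDS of $\mathtt{M}(G)$ is in particular a dominating set, so
$$\beta'(G)=\gamma(\mathtt{M}(G))\leq \gamma_{t2}(\mathtt{M}(G))\leq i_{t2}(\mathtt{M}(G)).$$
It therefore remains to prove $i_{t2}(\mathtt{M}(G))\leq \beta'(G)$. For this I will exhibit an ISDS of cardinality $\beta'(G)$.

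Let $F$ be a minimum edge cover of $G$. It is classical that a minimum edge cover induces a spanning forest of $G$ whose components are stars. Moreover, since $n\geq 3$ and $G$ is connected, every component of $(V(G),F)$ is a star $K_{1,r}$ with $r\geq 1$. I take $X=\{v^{i,j}: v_iv_j\in F\}\subseteq V_E(G)$, so $|X|=\beta'(G)$.

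First I check domination. Every $v_i\in V(G)$ is incident with some edge of $F$, hence adjacent in $\mathtt{M}(G)$ to a vertex of $X$. Every $v^{i,j}\notin X$ has an endpoint, say $v_i$, that is covered by some edge $v_iv_k\in F$, so $v^{i,j}$ and $v^{i,k}$ are adjacent in $\mathtt{L}(G)\subseteq \mathtt{M}(G)$.

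The main obstacle is independence, since $X$ is independent in $\mathtt{M}(G)$ only if $F$ is a matching; stars with $r\geq 2$ give adjacent vertices of $X$. I will fix this by replacing each star component $K_{1,r}$ with center $c$ and $r\geq 2$ by the $r$ original vertices, namely its leaves, rather than the subdivision vertices. Two leaves of the same star are at distance two via $c$, and they are independent in $\mathtt{M}(G)$ because $V(G)$ is independent there. A subdivision vertex is never adjacent to a leaf of another star, since no edge of $F$ joins them. I still need to check domination for this modified set:
\begin{itemize}
\item The center $c$ is adjacent to the subdivision vertices of its edges, not to the leaves. So $c$ is not dominated by the leaves, and this needs repair.
\end{itemize}
I therefore use the following alternative instead. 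For a star with $r\geq 2$, I take the $r-1$ leaves $\ell_2,\dots,\ell_r$ together with $v^{c,\ell_1}$. This set is independent, since $v^{c,\ell_1}$ is not adjacent to $\ell_j$ for $j\geq 2$. It dominates $c$ and $\ell_1$, and each $\ell_j$ lies in the set. Any edge $v^{c,x}$ is dominated by $v^{c,\ell_1}$ in $\mathtt{L}(G)$, and any $v^{\ell_j,x}$ is dominated by $\ell_j$. For the semitotal condition, $d(\ell_j,v^{c,\ell_1})=2$ via $v^{c,\ell_j}$.

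For stars that are single edges $K_2$, the vertex $v^{i,j}\in X$ needs another chosen vertex within distance two. For independence across different components, I need no chosen subdivision vertex $v^{a,b}$ to be adjacent to another chosen $v^{a',b'}$ or to a chosen leaf. Two chosen subdivision vertices from different components are adjacent iff their edges share an endpoint, which cannot happen since the components are vertex-disjoint. A chosen leaf $\ell$ is adjacent only to subdivision vertices of edges at $\ell$, and the chosen edge of any other component does not contain $\ell$. So the set is independent and dominating, of size $\sum r=\beta'(G)$.

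The semitotal condition for a $K_2$ component $v_iv_j$ uses connectivity and $n\geq 3$. There is an edge $v_iv_k$ (or $v_jv_k$) with $v_k$ in another component $Q$. If $v_k$ is a chosen leaf, then $d(v^{i,j},v_k)=2$ via $v^{i,k}$. If the chosen vertex in $Q$ is the subdivision vertex $v^{k,t}$, then $v^{i,j}$–$v^{i,k}$–$v^{k,t}$ is a path of length two. If $v_k$ is an unchosen center or the leaf $\ell_1$ of $Q$, then $v^{k,t}$ with $v_k$ its endpoint is chosen and gives the same distance-two path. Thus the set is an ISDS of size $\beta'(G)$, which completes the plan. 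The main delicate point is this cross-component distance check.
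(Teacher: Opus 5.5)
Your proof is correct, but it takes a different route from the paper's. The paper never uses the structure of an edge cover. It takes an arbitrary $i(\mathtt{M}(G))$-set $I$ and shows directly that $I$ is already semitotal:
\begin{itemize}
\item for $v_i\in I$ and a neighbour $v_j$ in $G$, either $v_j\in I$, or some $v^{j,k}\in I$ with $k\neq i$ lies at distance two from $v_i$;
\item for $v^{i,j}\in I$, the same argument applies to a vertex $v_k\in N_G(v_i)\setminus\{v_j\}$, which exists because $G$ is connected of order at least three.
\end{itemize}
It then closes the chain using both equalities $\gamma(\mathtt{M}(G))=i(\mathtt{M}(G))=\beta'(G)$ of Theorem~\ref{teo-domM}.

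You use only $\gamma(\mathtt{M}(G))=\beta'(G)$, and only for the lower bound. For the upper bound you build an explicit ISDS of size $\beta'(G)$ from the star-forest structure of a minimum edge cover. For each star with $r\geq 2$ you take $v^{c,\ell_1}$ together with $\ell_2,\dots,\ell_r$, and for each $K_2$-component you take its subdivision vertex. Your independence, domination and cross-component distance checks are all sound.

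Your approach is more self-contained on the upper-bound side: it reproves $i(\mathtt{M}(G))\leq\beta'(G)$ and exhibits a concrete optimal set. The paper's argument is shorter and proves a stronger structural fact: every minimum independent dominating set of $\mathtt{M}(G)$ is automatically an ISDS. Hence $i_{t2}(\mathtt{M}(G))=i(\mathtt{M}(G))$ holds regardless of the value $\beta'(G)$.

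In a final write-up, drop the abandoned ``all leaves'' attempt. Also state explicitly that the subdivision vertices $v^{\ell_1,x}$ are dominated by $v^{c,\ell_1}$; your current domination check mentions only edges at $c$ and at $\ell_j$ for $j\geq 2$.
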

	
\begin{proof}
Let $I$ be an $i(\mathtt{M}(G))$-set. We claim that $I$ is also an ISDS of $\mathtt{M}(G)$. To this end, it suffices to show that, for every $x\in I$, there exists a vertex $y\in I\setminus \{x\}$ such that $d_{\mathtt{M}(G)} (x,y)= 2$. First, consider a vertex $v_i \in I$. Let $v_j \in N_G(v_i)$. If $v_j \in I$, then $d_{\mathtt{M}(G)} (v_i,v_j) = 2$. On the other hand, if $v_j\notin I$, then, since $I$ is an IDS of $\mathtt{M}(G)$, there exists a vertex $v_k\in N_G(v_j) \setminus\{v_i\}$ such that $v^{j,k} \in I$. Consequently, $d_{\mathtt{M}(G)} (v_i, v^{j,k}) = 2$. Now, consider a vertex $v^{i,j} \in I$. Since $G$ is a connected graph of order at least three, it follows that $\max\{|N_G(v_i)|, |N_G(v_j)|\} \geq 2$. Without loss of generality, assume that $|N_G(v_i)| \geq 2$, and choose $v_k \in N_G(v_i) \setminus\{v_j\}$. If $v_k \in I$, then $d_{\mathtt{M}(G)} (v^{i,j}, v_k) = 2$. Moreover, if $v_k \notin I$, then there exists a vertex $v_l \in N_G(v_k) \setminus \{v_i\}$ such that $v^{k,l} \in I$. Hence, $d_{\mathtt{M}(G)} (v^{i,j}, v^{k,l}) =2$. Thus, in either case, every vertex in $I$ has another vertex in $I$ at distance two in $\mathtt{M}(G)$. Therefore, $I$ is also an ISDS of $\mathtt{M}(G)$, as claimed. Consequently, $i_{t2} (\mathtt{M}(G)) \leq |I|=i(\mathtt{M}(G))$. This inequality, together with Theorem \ref{teo-domM}, leads to
$$\beta' (G) = \gamma (\mathtt{M}(G)) \leq \gamma_{t2}(\mathtt{M}(G)) \leq i_{t2} (\mathtt{M}(G)) \leq i (\mathtt{M}(G)) = \beta'(G).$$
Therefore, $\gamma_{t2} (\mathtt{M}(G))=i_{t2} (\mathtt{M}(G)) = \beta'(G)$, which completes the proof.
\end{proof}

\section{Central graphs}\label{section-C}

We now turn our attention to the central graphs. To state our first result, we introduce the following family of graphs. A nontrivial connected graph $G$ is said to belong to the family $\mathcal{G}$ if there exists a $\beta(G)$-set that is also a dominating set of $\overline{G}$. The following theorem shows that $\gamma_{t2}(\mathtt{C}(G))$ is either $\beta(G)$ or $\beta(G)+1$, depending on whether or not $G$ belongs to $\mathcal{G}$.

\begin{theorem}
Let $G$ be a nontrivial connected graph. Then 
$$\gamma_{t2} (\mathtt{C}(G)) = 
	\begin{cases}
	 \beta(G) & \text{if } G \in \mathcal{G}, \\
	 \beta(G) + 1 & \text{otherwise.}
    \end{cases}$$
\end{theorem}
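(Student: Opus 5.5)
The plan is to first understand distances and domination in $\mathtt{C}(G)$, then prove the general lower bound $\gamma_{t2}(\mathtt{C}(G))\ge\beta(G)$ and two upper-bound constructions, and finally do the harder lower bound $\gamma_{t2}(\mathtt{C}(G))\ge\beta(G)+1$ for $G\notin\mathcal{G}$.

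\emph{Basic facts.} Any two distinct vertices $v_i,v_j\in V(G)$ are at distance at most two in $\mathtt{C}(G)$. If $v_iv_j\notin E(G)$ they are adjacent in $\mathtt{C}(G)$, and if $v_iv_j\in E(G)$ the path $v_iv^{i,j}v_j$ joins them. Hence any dominating set $D\subseteq V(G)$ of $\mathtt{C}(G)$ with $|D|\ge 2$ is automatically an SDS. Moreover, a vertex $v^{i,j}\notin D$ is dominated only by $v_i$ or $v_j$.

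\emph{General lower bound.} Given a dominating set $D$, define its projection $\pi(D)\subseteq V(G)$ by keeping $D\cap V(G)$ and replacing each $v^{i,j}\in D$ by one of its endpoints. Then $\pi(D)$ is a vertex cover of $G$:
\begin{itemize}
\item if $v^{i,j}\notin D$, then $v_i\in D$ or $v_j\in D$;
\item if $v^{i,j}\in D$, then an endpoint of $v_iv_j$ lies in $\pi(D)$.
\end{itemize}
So $\gamma_{t2}(\mathtt{C}(G))\ge\gamma(\mathtt{C}(G))\ge|\pi(D)|\ge\beta(G)$.

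\emph{Upper bounds.} If $G\in\mathcal{G}$, take a $\beta(G)$-set $S$ that dominates $\overline{G}$. It dominates every $v^{i,j}$ (being a vertex cover) and every vertex of $V(G)\setminus S$ (via $\overline{G}$). We must also have $|S|\ge2$. Indeed, $\beta(G)=1$ forces $G$ to be a star, and its center is isolated in $\overline{G}$, so a star is never in $\mathcal{G}$. By the basic facts, $S$ is then an SDS, giving $\gamma_{t2}(\mathtt{C}(G))\le\beta(G)$.

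If $G\notin\mathcal{G}$, take any $\beta(G)$-set $S$ and let $U$ be the set of vertices of $V(G)\setminus S$ adjacent in $G$ to all of $S$; these are exactly the vertices $S$ fails to dominate. Since $V(G)\setminus S$ is independent in $G$, any $u\in U$ is adjacent in $\overline{G}$ to every other vertex of $U$. Thus $S\cup\{u\}$ is a dominating subset of $V(G)$ with at least two vertices, hence an SDS, and $\gamma_{t2}(\mathtt{C}(G))\le\beta(G)+1$.

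\emph{Lower bound for $G\notin\mathcal{G}$.} Suppose $D$ is an SDS with $|D|=\beta(G)$. I would choose such a $D$ with $|D\cap V(G)|$ maximum, and aim to show $D\subseteq V(G)$. Then $D$ is a $\beta(G)$-set dominating $\overline{G}$, contradicting $G\notin\mathcal{G}$. The tightness $|\pi(D)|=|D|=\beta(G)$ gives strong structure. For each $v^{i,j}\in D$, neither $v_i$ nor $v_j$ lies in $D$, since otherwise a projection of size less than $\beta(G)$ would still be a vertex cover. Moreover, both choices of endpoint for $v^{i,j}$ yield minimum vertex covers.

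The key step, which I expect to be the main obstacle, is an exchange argument: replace some $v^{i,j}\in D$ by a suitable endpoint and show the result is still an SDS. It is dominating because every vertex of $V(G)$ lies within distance two of the new vertex, and every subdivision vertex remains covered. Semitotality comes from the basic facts once the set has two vertices of $V(G)$, or from the original distance-two partner of $v^{i,j}$. The delicate point is that $v_i$ or $v_j$ may have been dominated only by $v^{i,j}$. I would try first to pick the endpoint $v_i$ so that $v_j$ is still dominated, using the fact that both endpoint choices give minimum covers. This should force $v_j$ to be nonadjacent in $G$ to some other vertex of $\pi(D)$, and hence dominated in $\overline{G}$. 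That contradicts the maximality of $|D\cap V(G)|$ and completes the proof.
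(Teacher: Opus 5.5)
\paragraph{Verdict.} There is a genuine gap, at exactly the step you flag as the main obstacle. Everything before it is correct: the projection lower bound, both upper-bound constructions (including your check that stars are not in $\mathcal{G}$), and the tightness consequences.

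\paragraph{Why the proposed justification fails.} After swapping $v^{i,j}$ for $v_i$, the vertex $v_j$ can only be dominated by a vertex of $D\cap V(G)$ that is non-adjacent to $v_j$ in $G$. This is because $v_iv_j\in E(G)$, and by tightness no other subdivision vertex of $D$ is incident with $v_j$ (two of them sharing $v_j$ could both be projected to $v_j$).
\begin{itemize}
\item Non-adjacency to a vertex of $\pi(D)$ that is the projection of a \emph{different} $v^{k,l}$ is useless, since that vertex is not in the new set.
\item Your remark that every vertex of $V(G)$ is within distance two of the new vertex says nothing about domination.
\item Most seriously, the fact that both endpoint choices give minimum covers does not produce the needed non-neighbour at all.
\end{itemize}
For a concrete failure, take $\mathtt{C}(K_3)$, which is the $6$-cycle $a\,v^{a,x}\,x\,v^{x,y}\,y\,v^{a,y}\,a$. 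The set $D=\{a,v^{x,y}\}$ is a dominating set of size $\beta(K_3)=2$ with every property you derive. Yet neither $\{a,x\}$ nor $\{a,y\}$ is dominating. In fact $\gamma(\mathtt{C}(K_3))=2=\beta(K_3)$ although $K_3\notin\mathcal{G}$. So no argument that uses only domination plus tightness can close this step.

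\paragraph{The missing ingredient.} You need the semitotal condition at $v^{i,j}$ itself.
\begin{itemize}
\item Since $v_i,v_j\notin D$ and no other vertex of $D\cap V_E(G)$ is incident with $v_i$ or $v_j$, the only vertices of $D$ within distance two of $v^{i,j}$ in $\mathtt{C}(G)$ are vertices $a\in D\cap V(G)$ with $av_i\notin E(G)$ or $av_j\notin E(G)$.
\item Semitotality supplies such an $a$. If $av_j\notin E(G)$, swap $v^{i,j}$ for $v_i$; then $a$ dominates $v_j$, and all other vertices keep their dominators.
\item The new set is semitotal. It contains $a,v_i\in V(G)$. Every other $v^{k,l}\in D$ already had its distance-two partner in $D\cap V(G)$, because distinct subdivision vertices of $D$ share no endpoint and hence are at distance at least three.
\end{itemize}
This contradicts the maximality of $|D\cap V(G)|$, and the rest of your argument goes through.

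\paragraph{Comparison with the paper.} With this repair, your single-swap route is a clean alternative to the paper's argument. The paper replaces all of $D'\cap V_E(G)$ at once by a minimum vertex cover $Q'$ of $G[V(G)\setminus D']$. That works directly when $|D'\cap V_E(G)|\ge 2$, via the non-adjacency to other projected vertices that you had in mind. When $|D'\cap V_E(G)|=1$, however, the endpoint placed in $Q'$ must again be chosen using semitotality, a point the paper's ``straightforward to verify'' passes over.
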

	
\begin{proof}
Let $D$ be a $\gamma_{t2}(\mathtt{C}(G))$-set. If $V(G)\subseteq D$ or $G[V(G)\setminus D]$ is isomorphic to an edgeless graph, then $D\cap V(G)$ is a vertex cover of $G$. Consequently, $\beta(G) \leq |D\cap V(G)|\leq |D|=\gamma_{t2}(\mathtt{C}(G))$, as desired. Henceforth, assume that $V(G)\setminus D\neq \emptyset$ and that $G[V(G)\setminus D]$ contains  at least one edge. Observe that $V_E(G[V(G) \setminus D]) \subseteq D \cap V_E(G)$. Let $Q$ be a $\beta(G[V(G) \setminus D])$-set. Hence, $|Q| \leq |V_E(G[V(G) \setminus D])|\leq |D\cap V_E(G)|$. It is straightforward to verify that $(D \cap V(G)) \cup Q$ is a vertex cover of $G$. Therefore, 
$$\beta(G) \leq |D\cap V(G)| + |Q| \leq |D \cap V(G)| + |D\cap V_E(G)| = |D| = \gamma_{t2} (\mathtt{C}(G)).$$
Now, let $S$ be a $\beta(G)$-set, and choose a vertex $v_k \in V(G) \setminus S$. It is easy to check that $S\cup \{v_k\}$ is an SDS of $\mathtt{C}(G)$. Hence, $\gamma_{t2} (\mathtt{C}(G)) \leq |S \cup \{v_k\}| = \beta(G) + 1$. Consequently, $\gamma_{t2} (\mathtt{C}(G))\in \{ \beta(G),\beta(G)+1\}$.
It remains to characterize the case in which $\gamma_{t2} (\mathtt{C}(G))=\beta(G)$. 
Suppose first that $G \in \mathcal{G}$. By definition, there exists a $\beta(G)$-set $S'$ that is also a dominating set of $\overline{G}$, that is, $S'\setminus N_G(v) \neq \emptyset$ for every vertex $v\in V(G) \setminus S'$. This condition implies that $S'$ is also a dominating set of $\mathtt{C}(G)$. Furthermore, it is straightforward to verify that every vertex in $S'$ has another vertex in $S'$ at distance at most two in $\mathtt{C}(G)$. Thus, $S'$ is an SDS of $\mathtt{C}(G)$, and consequently, $\gamma_{t2} (\mathtt{C}(G)) \leq |S'|=\beta(G)$. Since $\gamma_{t2} (\mathtt{C}(G))\in \{ \beta(G),\beta(G)+1\}$, it follows that $\gamma_{t2}(\mathtt{C}(G)) = \beta(G)$, as desired. 
Conversely, suppose that $\gamma_{t2} (\mathtt{C}(G)) = \beta(G)$. Let $D'$ be a $\gamma_{t2} (\mathtt{C}(G))$-set such that $|D'\cap V(G)|$ is maximum. Suppose that $D'\cap V_E(G)\neq \emptyset$. By the maximality of $|D' \cap V(G)|$, it follows that $V_E(G[V(G) \setminus D']) = D' \cap V_E(G)$. Let $Q'$ be a $\beta(G[V(G) \setminus D'])$-set. Then $|Q'| \leq |V_E(G[V(G) \setminus D'])| = |D' \cap V_E(G)|$. Moreover, it is straightforward to verify that $D''=(D' \cap V(G)) \cup Q'$ is an SDS of $\mathtt{C}(G)$. Therefore,
\begin{equation*}
\gamma_{t2} (\mathtt{C}(G)) \leq |D''|=|D'\cap V(G)| + |Q'| \leq |D' \cap V(G)| + |D'\cap V_E(G)| = \gamma_{t2} (\mathtt{C}(G)).
\end{equation*}
Thus, all inequalities in the chain above must in fact be equalities. In particular, $|D''|=\gamma_{t2} (\mathtt{C}(G))$. Hence, $D''$ is a $\gamma_{t2} (\mathtt{C}(G))$-set. Furthermore, since $D'\cap V_E(G)\neq\emptyset$ and $D''\cap V(G)=(D'\cap V(G))\cup Q'$, we have that $|D''\cap V(G)|>|D'\cap V(G)|$, contradicting the maximality of $|D'\cap V(G)|$. Consequently, $D'\cap V_E(G)=\emptyset$.
This implies that $D'\subseteq V(G)$ and that $D'$ is a vertex cover of $G$. Moreover, since $|D'|=\gamma_{t2} (\mathtt{C}(G))=\beta(G)$, we conclude that $D'$ is a $\beta(G)$-set. As $D'$ is also a dominating set of $\overline{G}$, it follows that $G \in \mathcal{G}$, which completes the proof.
\end{proof}
	
\noindent
We now turn to the independent semitotal domination number of the central graph. The following theorem shows that $i_{t2}(\mathtt{C}(G))$ is either $i(\mathtt{C}(G))$ or $i(\mathtt{C}(G))+1$.

\begin{theorem}\label{theo-it2-i}
Let $G$ be a nontrivial connected graph. Then
$$i_{t2}(\mathtt{C}(G)) \in \{i(\mathtt{C}(G)), i(\mathtt{C}(G)) + 1\}.$$
\end{theorem}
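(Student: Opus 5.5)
The lower bound $i(\mathtt{C}(G))\le i_{t2}(\mathtt{C}(G))$ is immediate, since every ISDS is an IDS. So the work is the upper bound $i_{t2}(\mathtt{C}(G))\le i(\mathtt{C}(G))+1$. The plan is to take an $i(\mathtt{C}(G))$-set $I$ and show that either $I$ is already an ISDS, or a local modification of $I$ gives an ISDS with exactly one more vertex. The case $G\cong K_2$ is handled directly: $\mathtt{C}(K_2)\cong P_3$ and $i_{t2}(P_3)=2=i(P_3)+1$. Among all $i(\mathtt{C}(G))$-sets, I would choose $I$ with $|I\cap V(G)|$ maximum, mirroring the extremal choices used in the previous proofs of this section.

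First I record the structure of $I$ and the distances in $\mathtt{C}(G)$. Write $I=A\cup B$ with $A=I\cap V(G)$ and $B=I\cap V_E(G)$. Independence in $\mathtt{C}(G)$ forces $A$ to be a clique of $G$, and no edge represented in $B$ has an endpoint in $A$.

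The relevant distances are as follows.
\begin{itemize}
\item Any two vertices of $V(G)$ are at distance at most two in $\mathtt{C}(G)$: they are adjacent if they are nonadjacent in $G$, and joined through $v^{i,j}$ otherwise.
\item A vertex $v^{i,j}$ and a vertex $v_k\notin\{v_i,v_j\}$ are at distance two unless $v_k$ is adjacent in $G$ to both $v_i$ and $v_j$.
\item Two vertices of $V_E(G)$ sharing an endpoint are at distance two.
\end{itemize}
Hence a vertex $x\in I$ can violate the semitotal condition only in two ways. Either $x=v^{i,j}\in B$, every vertex of $A$ is a common $G$-neighbour of $v_i,v_j$, and no other vertex of $B$ meets $\{v_i,v_j\}$. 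Or $x\in A$, in which case $A=\{x\}$ and every edge in $B$ lies in the $G$-neighbourhood of $x$.

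The repair for a bad $x=v^{i,j}\in B$ is $I'=(I\setminus\{x\})\cup\{v_i,v_j\}$. Since $v_iv_j\in E(G)$, the vertices $v_i$ and $v_j$ are nonadjacent in $\mathtt{C}(G)$. They are also nonadjacent to $A$, because $A\subseteq N_G(v_i)\cap N_G(v_j)$. They are nonadjacent to $B\setminus\{x\}$ by the badness of $x$. So $I'$ is independent. It dominates, since $v^{i,j}$ is now dominated by $v_i$ and $x$ dominated nothing else. Moreover $v_i,v_j$ are at distance two from each other and from every vertex of $A$. For a bad $x=v_k\in A$, I would try the analogous swap: replace $v_k$ by $v_k$ together with a suitable $G$-neighbour, or exchange it for the endpoints of an edge.

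The main obstacle is ensuring that a single $+1$ suffices, i.e.\ that there is at most one bad vertex, or that one modification cures all of them at once. Here I would use the maximality of $|I\cap V(G)|$. Suppose two bad vertices $v^{i,j},v^{p,q}\in B$ exist. Replacing both by $\{v_i,v_j\}$ and $\{v_p,v_q\}$ costs two extra vertices. Instead I would look for an exchange of equal size, for instance replacing $v^{i,j}$ and $v^{p,q}$ by two vertices of $V(G)$. Such an exchange would yield another $i(\mathtt{C}(G))$-set with more vertices in $V(G)$, a contradiction. The key is to exploit that the bad edges are vertex-disjoint and completely joined in $G$ to $A$. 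I expect this exchange, and the analogous one when $A=\{v_k\}$ is bad simultaneously with bad elements of $B$, to need the most careful case analysis. Once at most one bad vertex remains, the swap above gives an ISDS of cardinality $i(\mathtt{C}(G))+1$.
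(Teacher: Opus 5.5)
Your swap $I'=(I\setminus\{x\})\cup\{v_i,v_j\}$ at a bad $x=v^{i,j}\in B$ is exactly the paper's Case~1, and your checks of independence and domination are correct. The gap is in the step you yourself flag, and the route you propose for it cannot work.

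\textbf{Multiple bad vertices.} Maximality of $|I\cap V(G)|$ does not leave at most one bad vertex. Take the bowtie $G=K_1+2K_2$, with centre $v_5$ and triangles $v_1v_2v_5$ and $v_3v_4v_5$. One checks that $i(\mathtt{C}(G))=3$ and that $I=\{v_5,v^{1,2},v^{3,4}\}$ is the unique $i(\mathtt{C}(G))$-set. Yet each of its three vertices is at distance three from the other two. So no equal-size exchange contradicting maximality exists.

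The missing observation is that the single swap already repairs every vertex. Let $v^{p,q}\in B\setminus\{x\}$ and suppose $v_iv_p\in E(G)$. Then $v_i\notin I$ (it is adjacent to $x$) and $v_p\notin I$ (it is adjacent to $v^{p,q}$). So domination forces $v^{i,p}\in I$, a vertex at distance two from $x$, contradicting badness. Hence $v_iv_p\notin E(G)$, and so $d_{\mathtt{C}(G)}(v^{p,q},v_i)=2$ for every $v^{p,q}\in B\setminus\{x\}$. Meanwhile $A\cup\{v_i,v_j\}$ consists of at least two vertices of $V(G)$, pairwise at distance two. Therefore $I'$ is an ISDS of cardinality $|I|+1$ for an arbitrary $i(\mathtt{C}(G))$-set $I$ with a bad vertex in $B$; no extremal choice of $I$ is needed.

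\textbf{The case $A=\{v_k\}$.} The case where the only bad vertex is $A=\{v_k\}$ is not actually handled. Taken literally, adding a $G$-neighbour $v_a$ of $v_k$ breaks independence: $v_a$ is not dominated by $v_k$, so it is dominated by some $v^{a,b}\in I$, which is adjacent to $v_a$. What is needed is the following.
\begin{enumerate}
\item Domination forces $N_G[v_k]=V(G)$. Indeed, a vertex $v_l\notin N_G[v_k]$ with a neighbour $v_r$ would force $v^{l,r}\in I$, and this vertex is at distance two from $v_k$.
\item Hence $B=V_E(G-v_k)$, and $G-v_k$ has no isolated vertex.
\item Take a maximum clique $C$ of $G-v_k$, and replace the set $C'$ of vertices of $B$ meeting $C$ by $C$ itself.
\end{enumerate}
Maximality of $C$ gives each vertex of $V(G)\setminus(C\cup\{v_k\})$ a $G$-non-neighbour in $C$. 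This yields both domination and the distance-two condition for the surviving edge-vertices. The inequality $|C|\le|C'|+1$ bounds the cost by one. This is the paper's Case~2.
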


\begin{proof}
If $G \cong P_2$, then it is easy to check that $i_{t2}(\mathtt{C}(P_2)) = 2 = i(\mathtt{C}(P_2)) + 1$. Thus, assume that $|V(G)| \geq 3$. By definition, $i_{t2}(\mathtt{C}(G)) \geq i(\mathtt{C}(G))$. Therefore, it remains to prove that $i_{t2}(\mathtt{C}(G)) \leq i(\mathtt{C}(G)) + 1$. Let $I$ be an $i(\mathtt{C}(G))$-set, and define $I^*=\{x\in I: d_{\mathtt{C}(G)}(x,y)\geq 3 \text{ for every vertex } y\in I\setminus \{x\}\}$. 
If $I$ is also an ISDS of $\mathtt{C}(G)$, then $i_{t2}(\mathtt{C}(G)) \leq |I| = i(\mathtt{C}(G))$, as desired. Henceforth, assume that $I$ is not an ISDS of $\mathtt{C}(G)$. Then $I^*\neq \emptyset$. We now consider the following two complementary cases.

\vspace{0.2cm}

\noindent 
Case 1: $I^*\cap V_E(G)\neq \emptyset$. Let $v^{k,l} \in I^*$. Observe that $J = (I \setminus \{v^{k,l}\}) \cup \{v_k, v_l\}$ is an IDS of $\mathtt{C}(G)$. Moreover, it is easy to check that every vertex $v^{i,j} \in J\cap V_E(G)$ satisfies that $d_{\mathtt{C}(G)}(v^{i,j}, v_k)=2$. 
Since $G[J \cap V(G)]$ is a clique of order at least two, we also have that $d_{\mathtt{C}(G)}(v_i, v_j) = 2$ for every two distinct vertices $v_i, v_j \in J \cap V(G)$. Therefore, $J$ is an ISDS of $\mathtt{C}(G)$, and consequently, $i_{t2}(\mathtt{C}(G)) \le |J| = |I| + 1 = i(\mathtt{C}(G)) + 1$, as desired.

\vspace{0.2cm}

\noindent 
Case 2: $I^*\subseteq V(G)$. Let $v_k \in I^*$. Since $G[I \cap V(G)]$ is a clique, we obtain that $I \cap V(G)=I^*=\{v_k\}$. Suppose that there exists a vertex $v_i\in V(G)\setminus N_G[v_k]$. Then, for every $v_j\in N_G(v_i)$, we have that $v^{i,j}\in I$. Moreover, $d_{\mathtt{C}(G)}(v_k, v^{i,j}) = 2$, contradicting the fact that $v_k\in I^*$. Therefore, $N_G[v_k]=V(G)$. 
Let $C\subseteq V(G)\setminus \{v_k\}$ be a set of maximum cardinality such that $G[C]$ is a clique, and define $C' = \{v^{i,j} \in I \cap V_E(G) : \{v_i, v_j\} \cap C \neq \emptyset\}$. Observe that $|C| \le |C'| + 1$. By the maximality of $|C|$, it is straightforward to verify that $J=(I \setminus C') \cup C$ is an ISDS of $\mathtt{C}(G)$. Hence, $i_{t2}(\mathtt{C}(G)) \leq |J| = |I| - |C'| + |C| \leq |I| + 1 = i(\mathtt{C}(G)) + 1$, as desired. 

\vspace{0.2cm}

\noindent 
From the preceding two cases, we conclude that $i_{t2}(\mathtt{C}(G)) \leq i(\mathtt{C}(G)) + 1$, which completes the proof.
\end{proof}

\noindent
We next consider the independent semitotal domination number of the central graph $\mathtt{C}(G)$ when $G$ is triangle-free or noncomplete $k$-regular. The following two theorems provide closed formulas for $i_{t2}(\mathtt{C}(G))$ under these assumptions.

\begin{theorem}
Let $G$ be a connected triangle-free graph of size $m\geq 2$. Then 
$$i_{t2} (\mathtt{C}(G)) =m+3-\max\{|N_G(v_i)|+|N_G(v_j)|:v_iv_j\in E(G)\}.$$
\end{theorem}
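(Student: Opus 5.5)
The plan is to squeeze $i_{t2}(\mathtt{C}(G))$ between the independent domination number and the size of one explicit set. Since every ISDS is an IDS, we have $i_{t2}(\mathtt{C}(G))\geq i(\mathtt{C}(G))$. Because $G$ is connected with $m\geq 2$, its order is at least three, so Theorem~\ref{theo-triangle-k-regunlar}(i) applies. It gives $i(\mathtt{C}(G))=m+3-\max\{|N_G(v_i)|+|N_G(v_j)|:v_iv_j\in E(G)\}$, which is the lower bound. For the reverse inequality I will exhibit an ISDS of exactly this cardinality.

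Fix an edge $v_iv_j\in E(G)$ maximizing $|N_G(v_i)|+|N_G(v_j)|$. Define
$$J=\{v_i,v_j\}\cup\{v^{a,b}\in V_E(G): \{v_a,v_b\}\cap\{v_i,v_j\}=\emptyset\}.$$
The edges incident with $v_i$ or $v_j$ number $|N_G(v_i)|+|N_G(v_j)|-1$, since only $v_iv_j$ is counted twice. Hence $|J|=2+m-(|N_G(v_i)|+|N_G(v_j)|-1)$, which is the desired value.

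Next I will verify that $J$ is an ISDS of $\mathtt{C}(G)$.
\begin{enumerate}
\item[(a)] Independence. The vertices $v_i,v_j$ are adjacent in $G$, hence nonadjacent in $\mathtt{C}(G)$. Vertices of $V_E(G)$ are pairwise nonadjacent in $\mathtt{C}(G)$. By construction, no chosen $v^{a,b}$ is incident with $v_i$ or $v_j$.
\item[(b)] Domination of $V_E(G)\setminus J$. Every vertex of $V_E(G)\setminus J$ corresponds to an edge incident with $v_i$ or $v_j$, so it is adjacent to $v_i$ or $v_j$.
\item[(c)] Domination of $V(G)\setminus\{v_i,v_j\}$. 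Let $v_k\in V(G)\setminus\{v_i,v_j\}$. Because $G$ is triangle-free, $v_k$ is not adjacent in $G$ to both $v_i$ and $v_j$. Hence $v_k$ is adjacent in $\overline{G}$, and therefore in $\mathtt{C}(G)$, to at least one of them.
\item[(d)] The semitotal condition. First, $d_{\mathtt{C}(G)}(v_i,v_j)=2$ via $v^{i,j}$. Second, take $v^{a,b}\in J$. The vertex $v_a\notin\{v_i,v_j\}$ is adjacent in $\mathtt{C}(G)$ to $v_i$ or $v_j$, by the same triangle-free argument as in (c). Thus $v^{a,b}$ lies at distance two from $v_i$ or $v_j$ along the path through $v_a$.
\end{enumerate}

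This gives $i_{t2}(\mathtt{C}(G))\leq m+3-\max\{|N_G(v_i)|+|N_G(v_j)|:v_iv_j\in E(G)\}$, and combined with the lower bound the equality follows. I do not expect a serious obstacle. The only delicate points are that triangle-freeness is used twice, in (c) and (d), and that the hypothesis $m\geq 2$ is needed only to reach order at least three for Theorem~\ref{theo-triangle-k-regunlar}(i). This hypothesis also excludes $G\cong P_2$, where $i_{t2}=i+1$ as shown in Theorem~\ref{theo-it2-i}.
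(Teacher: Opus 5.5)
Your proof is correct, but it takes a different route from the paper's.

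The paper also cites Theorem~\ref{theo-triangle-k-regunlar}(i). Its own argument, however, is the formula-free claim that \emph{every} $i(\mathtt{C}(G))$-set $I$ is already an ISDS. Because $G[I\cap V(G)]$ is a clique in a triangle-free graph, $|I\cap V(G)|\le 2$. A case analysis on $|I\cap V(G)|\in\{0,1,2\}$ then shows that each vertex of $I$ has a partner in $I$ at distance two; in the first case this uses $I=V_E(G)$. The closed formula is only transferred from $i$ to $i_{t2}$ at the very end.

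You instead use only the lower-bound half of the cited theorem, namely $i(\mathtt{C}(G))\ge m+3-\max\{|N_G(v_i)|+|N_G(v_j)|:v_iv_j\in E(G)\}$. You match it with the explicit set $J$ built from an edge of maximum degree sum. Your checks (a)--(d) are all sound: triangle-freeness gives that every $v_a\notin\{v_i,v_j\}$ is adjacent in $\overline{G}$ to $v_i$ or $v_j$, which handles both domination and the distance-two condition.

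What your route buys: it is shorter, avoids the case analysis, and exhibits an optimal ISDS explicitly. The equality $i_{t2}(\mathtt{C}(G))=i(\mathtt{C}(G))$ then appears only as a corollary of matching bounds.

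What the paper's route buys: it proves $i_{t2}(\mathtt{C}(G))=i(\mathtt{C}(G))$ without knowing the value of $i$. It also gives a stronger structural fact: the minimum independent dominating sets and the minimum independent semitotal dominating sets of $\mathtt{C}(G)$ are the same family. This is the viewpoint the paper needs to connect with Theorem~\ref{theo-it2-i} and with the parallel argument for the $k$-regular case.
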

	
\begin{proof}
By Theorem~\ref{theo-triangle-k-regunlar}-(i), we only need to prove that $i_{t2}(\mathtt{C}(G))=i(\mathtt{C}(G))$. By definition, $i_{t2}(\mathtt{C}(G)) \geq i(\mathtt{C}(G))$. Therefore, it remains to prove that $i_{t2}(\mathtt{C}(G)) \leq i(\mathtt{C}(G))$. Let $I$ be an $i(\mathtt{C}(G))$-set. Since $G$ is a triangle-free graph and $G[I \cap V(G)]$ is a clique, we have that $|I\cap V(G)|\leq 2$. We consider the following three complementary cases.

\vspace{.2cm}

\noindent
Case 1: $I\cap V(G)=\emptyset$. In this case, $I= V_E(G)$. Since $G$ is a connected graph of order at least three, it follows that $I$ is also an ISDS of $\mathtt{C}(G)$. Therefore, $i_{t2} (\mathtt{C}(G)) \leq |I| = i(\mathtt{C}(G))$, as desired.

\vspace{.2cm}
		
\noindent
Case 2: $I \cap V(G)=\{v_k\}$. If $N_G[v_k]=V(G)$, then, since $G$ is triangle-free, $G$ is isomorphic to a star graph. Consequently, $N_{\mathtt{C}(G)}(v_i) \cap I = \emptyset$ for every $v_i\in V(G) \setminus \{v_k\}$, a contradiction. Hence, $N_G[v_k]\neq V(G)$, which implies that $I\cap V_E(G)\neq \emptyset$.
Let $v^{i,j} \in I\cap V_E(G)$. Since $G$ is triangle-free, it follows that $v_iv_k\notin E(G)$ or $v_jv_k \notin E(G)$. In either case, we obtain that $d_{\mathtt{C}(G)} (v^{i,j}, v_k) = 2$. Thus, $I$ is also an ISDS of $\mathtt{C}(G)$. Therefore, $i_{t2} (\mathtt{C}(G)) \leq |I| = i(\mathtt{C}(G))$, as desired.

\vspace{.2cm}
		
\noindent
Case 3: $I \cap V(G)=\{v_k,v_l\}$ ($k\neq l$). In this case, $d_{\mathtt{C}(G)} (v_k,v_l) = 2$. If $I=\{v_k,v_l\}$, then $I$ is also an ISDS of $\mathtt{C}(G)$, and thus, $i_{t2} (\mathtt{C}(G)) \leq |I| = i(\mathtt{C}(G))$, as desired. Assume that $I\cap V_E(G)\neq \emptyset$. Proceeding analogously to Case 2, we obtain that every vertex $v^{i,j} \in I$ satisfies that $d_{\mathtt{C}(G)} (v^{i,j}, v_k) = 2$. Thus, $I$ is also an ISDS of $\mathtt{C}(G)$. Therefore, $i_{t2} (\mathtt{C}(G)) \leq |I| = i(\mathtt{C}(G))$, as desired.

\vspace{.2cm}
		
\noindent 
From the preceding three cases, we conclude that $i_{t2}(\mathtt{C}(G))=i(\mathtt{C}(G))$, which completes the proof.
\end{proof}

\begin{theorem}
Let $G$ be a connected $k$-regular graph, distinct from a complete graph. Then
$$i_{t2}(\mathtt{C}(G))=\frac{nk+\omega(G)^2-(2k-1)\omega(G)}{2}.$$
\end{theorem}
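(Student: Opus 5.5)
By Theorem~\ref{theo-triangle-k-regunlar}-(ii), the right-hand side equals $i(\mathtt{C}(G))$. It therefore suffices to show $i_{t2}(\mathtt{C}(G))\le i(\mathtt{C}(G))$, exactly as in the triangle-free case; the reverse inequality holds by definition. The plan is to take an $i(\mathtt{C}(G))$-set $I$ and show that it is already semitotal, i.e.\ that the set $I^*$ from the proof of Theorem~\ref{theo-it2-i} is empty.

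First we record the structure of $I$. The set $I\cap V(G)$ induces a clique $K$ of $G$, since vertices of $V(G)$ nonadjacent in $G$ are adjacent in $\mathtt{C}(G)$. A vertex $v_i\notin K$ is dominated by $I$ only through some $v^{i,j}\in I$; and by independence, $I\cap V_E(G)$ contains no edge with an endpoint in $K$. Hence $I\cap V_E(G)$ is an edge set of $G-K$ covering all vertices of $V(G)\setminus K$ that are adjacent in $G$ to every vertex of $K$ (the others are dominated in $\overline{G}$ by $K$). Moreover, $V_E(G[K])\cap I=\emptyset$, yet each $v^{i,j}$ with $v_i,v_j\in K$ is dominated, which is automatic since $v_i\in I$.

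Next we check the distance-two condition vertex by vertex, following Cases 1 and 2 of Theorem~\ref{theo-it2-i}.
\begin{enumerate}
\item[(a)] If $|K|\ge 2$, any two vertices of $K$ share a common neighbour $v^{i,j}$, so they are at distance two.
\item[(b)] If $v^{i,j}\in I$ and $K\neq\emptyset$, it suffices that some $v_k\in K$ is nonadjacent in $G$ to $v_i$ or to $v_j$. Then $v_k$ is adjacent in $\mathtt{C}(G)$ to, say, $v_i$, which in turn is adjacent to $v^{i,j}$, giving distance two. This fails only if $v_i,v_j$ are both complete to $K$, i.e.\ $K\cup\{v_i,v_j\}$ is a clique.
\item[(c)] If $K=\emptyset$, then $I=V_E(G)$, and any two edges sharing an endpoint are at distance two; this works since $G$ is connected and $k$-regular with $k\ge 2$ (the case $k=1$ forces $G=K_2$, which is complete).
\item[(d)] If $K=\{v_k\}$, then $v_k\in I^*$ forces $N_G[v_k]=V(G)$, as shown in Case 2 of Theorem~\ref{theo-it2-i}. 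Regularity then makes $G$ complete, a contradiction.
\end{enumerate}

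The main obstacle is (b). There I would argue that if $K\cup\{v_i,v_j\}$ is a clique, then $I$ is not minimum. The replacement $I'=(I\setminus\{v^{i,j}\})\cup\{v_i\}$ is not immediately independent, because $v_i$ is adjacent to other chosen edges $v^{i,l}\in I$. So instead I would compare with the counting formula. The proof of Theorem~\ref{theo-triangle-k-regunlar}-(ii) presumably shows that a minimum IDS with clique part $K$ has size $\frac{n k-|K|(2k-|K|+1)}{2}$: it takes all edges of $G-K$ except none, and the count is decreasing in $|K|$ up to $\omega$, so optimal sets have $|K|=\omega(G)$. Under that reading, an edge $v_iv_j$ complete to a maximum clique $K$ would yield a larger clique, a contradiction. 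Rather than rely on unseen internals, I would choose $I$ with $|K|$ maximum among $i(\mathtt{C}(G))$-sets, hence $|K|=\omega(G)$ by the formula. Then no $v^{i,j}\in I$ is complete to $K$, since otherwise $K\cup\{v_i,v_j\}$ would be a clique of order $\omega+2$. This concludes that $I$ is an ISDS, and hence the equality.
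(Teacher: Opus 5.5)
Your final route differs from the paper's and can be completed, but its pivotal step is asserted rather than proved. That step is the claim that an $i(\mathtt{C}(G))$-set with $|K|$ maximum has $|K|=\omega(G)$ ``by the formula''. Theorem~\ref{theo-triangle-k-regunlar}-(ii) gives only the value of $i(\mathtt{C}(G))$, not which sets attain it, so you still have to exhibit an optimal set built on a maximum clique.

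Your guessed count $\frac{nk-|K|(2k-|K|+1)}{2}$ is also wrong. It equals $|E(G-K)|$ and omits the $|K|$ clique vertices, so at $|K|=\omega(G)$ it falls $\omega(G)$ below $i(\mathtt{C}(G))$. Your structural paragraph understates matters too. Any $v^{i,j}$ with $v_i,v_j\notin K$ has no neighbor in $I$, so it must lie in $I$ itself, and hence $I\cap V_E(G)=V_E(G-K)$ is forced.

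The repair is direct. Take a maximum clique $K$ and set $I_K=K\cup V_E(G-K)$.
\begin{itemize}
\item \emph{Domination.} By maximality, every $v_i\notin K$ has a non-neighbor in $K$, hence a $\mathtt{C}(G)$-neighbor in $K$. A subdivision vertex with an endpoint in $K$ is dominated by that endpoint, and the remaining ones lie in $I_K$.
\item \emph{Independence.} This is clear.
\item \emph{Semitotality.} The same maximality puts each $v^{i,j}\in I_K$ at distance two from some vertex of $K$, so no $\omega+2$ clique argument is needed. Vertices of $K$ are pairwise at distance two because $|K|\ge 2$.
\item \emph{Size.} $|I_K|=\omega+\frac{nk}{2}-k\omega+\binom{\omega}{2}=\frac{nk+\omega^2-(2k-1)\omega}{2}$, where $\omega=\omega(G)$.
\end{itemize}
So $i_{t2}(\mathtt{C}(G))\le i(\mathtt{C}(G))$, and your items (c) and (d) become superfluous.

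The paper instead fixes an arbitrary $i(\mathtt{C}(G))$-set $I$ and splits on $|I\cap V(G)|$. It meets exactly the situation that blocked your item (b): both endpoints of $v^{i,j}\in I$ are adjacent in $G$ to all of $I\cap V(G)$. There it uses regularity and non-completeness to find a neighbor $v_t$ of $v_i$ (or of $v_j$) outside $(I\cap V(G))\cup\{v_j\}$. Since $v_i,v_t\notin I$, the vertex $v^{i,t}$ is forced into $I$ and lies at distance two from $v^{i,j}$. This gives the stronger conclusion that every $i(\mathtt{C}(G))$-set is already semitotal, and the inequality $i_{t2}\le i$ there does not depend on the formula at all.

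Your route needs the exact value from Theorem~\ref{theo-triangle-k-regunlar}-(ii) to certify that $I_K$ is optimal. In exchange it is shorter, and the bound $i_{t2}(\mathtt{C}(G))\le \omega(G)+|E(G-K)|$ holds for every nontrivial connected graph $G$, regular or not.
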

	
\begin{proof}
By Theorem~\ref{theo-triangle-k-regunlar}-(ii), we only need to prove that $i_{t2}(\mathtt{C}(G))=i(\mathtt{C}(G))$. By definition, $i_{t2}(\mathtt{C}(G)) \geq i(\mathtt{C}(G))$. Therefore, it remains to prove that $i_{t2}(\mathtt{C}(G)) \leq i(\mathtt{C}(G))$. Let $I$ be an $i(\mathtt{C}(G))$-set. We consider the following three complementary cases.

\vspace{.2cm}
		
\noindent
Case 1: $I\cap V(G)=\emptyset$. In this case, $I= V_E(G)$. Since $G$ is a connected graph of order at least four, it follows that $I$ is also an ISDS of $\mathtt{C}(G)$. Therefore, $i_{t2} (\mathtt{C}(G)) \leq |I| = i(\mathtt{C}(G))$, as desired.

\vspace{.2cm}
		
\noindent
Case 2: $I \cap V(G)=\{v_r\}$. If $N_G[v_r]=V(G)$, then, since $G$ is $k$-regular, $G$ is isomorphic to a complete graph, contradicting the assumption. Hence, $N_G[v_r]\neq V(G)$, which implies that $I\cap V_E(G)\neq \emptyset$. 
First, we prove that there exists a vertex $y\in I$ such that $d_{\mathtt{C}(G)}(v_r, y)= 2$. Let $v_s\in V(G)\setminus N_G[v_r]$. Since $G$ is a connected graph of order at least four, there exists a vertex $v_t\in N_G(v_s)$. Hence, $v^{s,t}\in I$ and satisfies that $d_{\mathtt{C}(G)} (v_r, v^{s,t})=2$, as desired. Now, we prove that every vertex in $I\cap V_E(G)$ has another vertex in $I$ at distance two in $\mathtt{C}(G)$. Let $v^{i,j} \in I\cap V_E(G)$. If $|N_G(v_r)\cap \{v_i,v_j\}|\leq 1$, then $d_{\mathtt{C}(G)} (v_r,v^{i,j}) = 2$. Henceforth, assume that $v_i,v_j\in N_G(v_r)$. If $N_G(v_i) = \{v_j,v_r\}$ and $N_G(v_j) = \{v_i,v_r\}$, then, since $G$ is a connected $k$-regular graph, $G \cong K_3$, a contradiction. Hence, $N_G(v_i) \neq \{v_j,v_r\}$ or $N_G(v_j) \neq  \{v_i,v_r\}$. Without loss of generality, assume that $N_G(v_i) \neq \{v_j,v_r\}$. Then there exists a vertex $v_l \in N_G(v_i) \setminus \{v_j,v_r\}$. Observe that $v^{i,l}\in I$ and $d_{\mathtt{C}(G)} (v^{i,j}, v^{i,l}) =2$. Thus, $I$ is also an ISDS of $\mathtt{C}(G)$. Therefore, $i_{t2} (\mathtt{C}(G)) \leq |I| = i(\mathtt{C}(G))$, as desired.

\vspace{.2cm}
		
\noindent
Case 3:  $|I\cap V(G)|\geq 2$. In this case, we have that $d_{\mathtt{C}(G)} (v_i,v_j) = 2$ for any two distinct vertices $v_i,v_j\in I\cap V(G)$. 
If $I\subseteq V(G)$, then $I$ is also an ISDS of $\mathtt{C}(G)$, and thus, $i_{t2} (\mathtt{C}(G)) \leq |I| = i(\mathtt{C}(G))$, as desired. Assume that $I\cap V_E(G)\neq \emptyset$. 
Now, we prove that every vertex in $I\cap V_E(G)$ has another vertex in $I$ at distance two in $\mathtt{C}(G)$. 
Let $v^{i,j} \in I\cap V_E(G)$. If $N_G(v_i) = (I\cap V(G))\cup \{v_j\}$ and $N_G(v_j) = (I\cap V(G)) \cup \{v_i\}$, then, since $G$ is a connected $k$-regular graph, $k = |I\cap V(G)| + 1$. Observe that $((I\cap V(G)) \setminus\{v_l\} ) \cup \{v_i,v_j\} \subseteq N_G(v_l)$ for any $v_l \in I\cap V(G)$. Hence,  $N_G(v_l) = ((I\cap V(G)) \setminus\{v_l\} ) \cup \{v_i,v_j\}$. Consequently, $G$ is isomorphic to a complete graph, which contradicts the assumption. Thus, $N_G(v_i) \neq (I\cap V(G))\cup \{v_j\}$ or $N_G(v_j) \neq (I\cap V(G)) \cup \{v_i\}$. Without loss of generality, assume that $N_G(v_i) \neq (I\cap V(G))\cup \{v_j\}$.
If there exists a vertex $v_s \in   (I\cap V(G))\setminus N_G (v_i)$, then $d_{\mathtt{C}(G)} (v^{i,j}, v_s) =2$, as desired. On the other hand, if $I\cap V(G) \subseteq N_G(v_i)$, then there exists a vertex $v_t \in N_G(v_i) \setminus ((I\cap V(G)) \cup \{v_j\})$ such that $v^{i,t}\in I$ and $d_{\mathtt{C}(G)} (v^{i,j}, v^{i,t}) = 2$, as desired. Therefore, $I$ is also an ISDS of $\mathtt{C}(G)$. Thus, $i_{t2} (\mathtt{C}(G)) \leq |I| = i(\mathtt{C}(G))$, as desired.

\vspace{.2cm}
		
\noindent 
From the preceding three cases, we conclude that $i_{t2}(\mathtt{C}(G))=i(\mathtt{C}(G))$, which completes the proof.
\end{proof}

\noindent
The following lemma will be useful in proving the relation between $i_{t2}(\mathtt{C}(K_1+G))$ and $i_{t2}(\mathtt{C}(G))$ given in the subsequent theorem.

\begin{lemma}\label{lem-join}
Let $G$ be a nontrivial connected graph. Then there exists an $i_{t2}(\mathtt{C}(G))$-set $I$ such that $I\cap V(G) \neq \emptyset$.
\end{lemma}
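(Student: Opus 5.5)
Let $I$ be an $i_{t2}(\mathtt{C}(G))$-set. If $I\cap V(G)\neq\emptyset$ there is nothing to prove, so I assume $I\subseteq V_E(G)$. The aim is to build another ISDS of the same size that contains an original vertex. Since every $v_i\in V(G)$ must be dominated by $I$, each $v_i$ is incident in $G$ to an edge whose subdivision vertex lies in $I$. Independence of $I$ in $\mathtt{C}(G)$ is automatic, because $V_E(G)$ is independent in $\mathtt{C}(G)$. Domination also forces every $v^{i,j}\notin I$ to be adjacent to a vertex of $I\cap V(G)=\emptyset$, which is impossible. Hence $I=V_E(G)$ and $|I|=m$.

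So it suffices to exhibit an ISDS of $\mathtt{C}(G)$ of size at most $m$ that meets $V(G)$. First I would treat the small cases directly. For $G\cong P_2$ we have $m=1$, but no set of size one is an SDS, so $i_{t2}(\mathtt{C}(P_2))=2$, consistent with the proof of Theorem~\ref{theo-it2-i}. In this case $I=V_E(G)$ is impossible, and a minimum set such as $\{v_1,v^{1,2}\}$ already meets $V(G)$; this needs a quick check. Henceforth I assume $n\ge 3$.

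The main construction imitates Case~1 of Theorem~\ref{theo-it2-i}. Pick an edge $v_kv_l$ of $G$ and set
$$J=(V_E(G)\setminus\{v^{i,j}: \{v_i,v_j\}\cap\{v_k,v_l\}\neq\emptyset\})\cup\{v_k,v_l\}.$$
Each step needs a check.
\begin{itemize}
\item $J$ is independent: $v_kv_l\in E(G)$, so they are nonadjacent in $\mathtt{C}(G)$, and every subdivision vertex at $v_k$ or $v_l$ has been removed.
\item The removed $v^{i,j}$ are dominated by $v_k$ or $v_l$.
\item Each other vertex $v_s$ is dominated by $v_k$ unless $v_s\in N_G(v_k)$, and by $v_l$ unless $v_s\in N_G(v_l)$. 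So the problem vertices are the common neighbours of $v_k$ and $v_l$.
\end{itemize}
The number of removed subdivision vertices is $|N_G(v_k)|+|N_G(v_l)|-1\ge 2$ when $n\ge3$ and $v_kv_l$ is chosen with a vertex of degree at least two. Hence $|J|\le m-1$ before repairs.

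To fix common neighbours, I would instead take a maximal clique $C$ of $G$ containing $v_k$. I put $C$ into $J$ and remove all subdivision vertices incident to $C$. Every vertex outside $C$ then has a non-neighbour in $C$ by maximality, so it is dominated. Also $d(v_i,v_j)=2$ within $C$ through a common subdivision vertex, and each remaining subdivision vertex $v^{i,j}$ is at distance two from a vertex of $C$ not adjacent to $v_i$ or $v_j$, or from another edge vertex sharing an endpoint. The removed set has size at least $|C|(|C|-1)/2+\dots\ge |C|$ when $|C|\ge3$, and for $|C|=2$ there are at least two removed edges since $n\ge3$. Thus $|J|\le m$.

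The main obstacle is the semitotal condition for the leftover subdivision vertices, together with the edge case $|C|=2$ with $C$ covering all edges, as in a star. There I would add one extra leaf vertex and recount, using connectivity and $n\ge 3$.
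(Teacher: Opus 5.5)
Your argument is correct and is essentially the paper's proof. A minimum set avoiding $V(G)$ must equal $V_E(G)$, and replacing the subdivision vertices incident to a clique $C$ by $C$ itself gives an ISDS of size at most $m$ that meets $V(G)$. Your maximal clique works as well as the paper's maximum one, since only maximality is used for domination and for the semitotal condition of the leftover subdivision vertices.

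There are two small slips. First, for $P_2$ the set $\{v_1,v^{1,2}\}$ is not independent in $\mathtt{C}(P_2)\cong P_3$; the minimum set is $\{v_1,v_2\}$, although your observation that $V_E(G)$ cannot be an SDS already settles this case. Second, the ``obstacle'' in your last paragraph is already handled by your general construction: for a star, $C$ is the centre plus one leaf and $|J|=2\le m$. Adding a further leaf, as you suggest there, would actually destroy independence, so that step should be dropped.
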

	
\begin{proof}
If $G\cong P_2$, then the result is immediate. Henceforth, assume that $|V(G)|\geq 3$. Let $I$ be an $i_{t2}(\mathtt{C}(G))$-set such that $|I\cap V(G)|$ is maximum. Suppose that $I\cap V(G) = \emptyset$. This implies that $I= V_E(G)$. Let $C\subseteq V(G)$ be a set of maximum cardinality such that $G[C]$ is a clique. Let $Y = \{v^{i,j}\in V_E(G) :\ \{v_i,v_j\} \cap C \neq \emptyset\}$. Observe that $|Y| \geq |C|$. It is straightforward to verify that $I' = (I\setminus Y) \cup C$ is an ISDS of $\mathtt{C}(G)$. Hence, $|I'| = |I| - |Y| + |C| \leq |I| = i_{t2}(\mathtt{C}(G))$ and $|I' \cap V(G)|> |I\cap V(G)|$, which is a contradiction. Therefore, $I\cap V(G) \neq \emptyset$, which completes the proof.
\end{proof}

\begin{theorem}\label{theo-K1+G}
Let $G$ be a nontrivial connected graph. Then 
$$i_{t2} (\mathtt{C}(K_1 + G)) = i_{t2} (\mathtt{C}(G)) + 1.$$
\end{theorem}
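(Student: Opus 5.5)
The plan is to prove the two inequalities separately. Throughout, write $H=K_1+G$, let $v_0$ be the universal vertex added to $G$, and let $v^{0,i}$ ($v_i\in V(G)$) be the new edge-vertices. The key structural observation is that $\mathtt{C}(G)$ is an induced subgraph of $\mathtt{C}(H)$. Indeed, $\overline{H}$ restricted to $V(G)$ is $\overline{G}$, and $v_0$ is isolated in $\overline{H}$. So the only new adjacencies are $v_0v^{0,i}$ and $v_iv^{0,i}$. Consequently:
\begin{itemize}
\item $N_{\mathtt{C}(H)}(v_0)=\{v^{0,i}: v_i\in V(G)\}$;
\item the vertices at distance exactly two from $v_0$ are precisely those of $V(G)$;
\item any path between two vertices of $\mathtt{C}(G)$ that leaves $\mathtt{C}(G)$ has length at least $4$, so distances at most two between old vertices are the same in $\mathtt{C}(G)$ and $\mathtt{C}(H)$.
\end{itemize}

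\emph{Upper bound.} By Lemma~\ref{lem-join}, take an $i_{t2}(\mathtt{C}(G))$-set $I$ with $I\cap V(G)\neq\emptyset$, and set $J=I\cup\{v_0\}$. Independence holds since $I$ contains no $v^{0,i}$. Every $v^{0,i}$ is dominated by $v_0$, and old vertices are dominated as before. The vertex $v_0$ is at distance two from any $v_i\in I\cap V(G)$, and old witnesses remain valid. Hence $i_{t2}(\mathtt{C}(H))\le i_{t2}(\mathtt{C}(G))+1$.

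\emph{Lower bound.} Let $J$ be an $i_{t2}(\mathtt{C}(H))$-set. First normalise so that $v_0\in J$. If $v_0\notin J$, let $K=J\cap V(G)$; this is a clique of $G$, since $J$ is independent in $\mathtt{C}(G)$. Domination of each $v^{0,i}$ together with independence forces $v^{0,i}\in J$ exactly for $v_i\notin K$. Domination of $v_0$ then gives $K\neq V(G)$. Replacing all these $v^{0,i}$ by $v_0$ does not increase $|J|$ and keeps independence and domination. Semitotality of $v_0$ needs $K\neq\emptyset$. If $K=\emptyset$, first swap in a single $v_i$ for the edge-vertices meeting it, as in Lemma~\ref{lem-join}; this needs some care, and I would handle it directly.

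So assume $v_0\in J$, hence no $v^{0,i}\in J$, and put $J'=J\setminus\{v_0\}\subseteq V(\mathtt{C}(G))$. Then $J'$ is independent and dominates $\mathtt{C}(G)$, because $v_0$ dominates no old vertex. By the distance remark, every $x\in J'$ keeps its witness unless that witness was $v_0$. This can only happen for $x=v_i\in V(G)$ that is at distance at least $3$ in $\mathtt{C}(G)$ from all of $J'\setminus\{v_i\}$. If $J'$ is already an ISDS of $\mathtt{C}(G)$, we get $i_{t2}(\mathtt{C}(G))\le |J|-1$, as required.

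\emph{Main obstacle.} The difficulty is the remaining case, where some such $v_i$ exists. Two vertices of a clique of $G$ are at distance two in $\mathtt{C}(G)$, so this forces $J'\cap V(G)=\{v_i\}$. Now argue as in Case~2 of Theorem~\ref{theo-it2-i}. If $v_s\notin N_G[v_i]$ has a neighbour $v_t$, then $v^{s,t}$ must lie in $J$ (it is dominated only by itself, $v_s$ or $v_t$), and it is at distance $2$ from $v_i$, a contradiction. Hence $N_G[v_i]=V(G)$, and $J'\setminus\{v_i\}$ consists of edge-vertices of $G-v_i$. I would then exploit the minimality of $J$ in $\mathtt{C}(H)$: take a maximum clique $C$ of $G-v_i$ and the set $C'$ of edge-vertices of $J$ meeting $C$, and compare with the swap $(J\setminus C')\cup C$. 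This is the step I expect to be delicate. The aim is to show that either this swap yields an ISDS of $\mathtt{C}(H)$ that is strictly smaller than $J$ or has more old vertices, contradicting an extremal choice of $J$ (minimum size, then maximum $|J\cap V(G)|$, fixed at the outset). Alternatively, the swap applied to $J'$ should produce an ISDS of $\mathtt{C}(G)$ of size at most $|J|-1$.
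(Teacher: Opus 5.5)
Your upper bound and your treatment of the case $v_0\in J$ match the paper's proof (its Case~1). The gap is the reduction of the case $v_0\notin J$ to that case: replacing $\{v^{0,i}: v_i\notin K\}$ by $v_0$ does \emph{not} preserve domination. In $\mathtt{C}(H)$ a vertex $v_i\in V(G)\setminus K$ is adjacent only to three kinds of vertices: its non-neighbours in $G$, the $v^{i,j}$ with $v_j\in N_G(v_i)$, and $v^{0,i}$. Suppose $N_G(v_i)=K$. Then no non-neighbour of $v_i$ lies in $J$. Every $v^{i,j}$ is excluded from $J$ by independence, because $v_j\in K$. So $v^{0,i}$ is the only dominator of $v_i$.

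For example, let $G$ have vertices $a,b,x,y$ and edges $ab,ax,bx,ay$. Then $J=\{a,b,v^{0,x},v^{0,y}\}$ is an ISDS of $\mathtt{C}(H)$ of minimum size $4$ (here $i_{t2}(\mathtt{C}(G))=3$). Your replacement $\{a,b,v_0\}$ leaves $x$ undominated. In fact the replacement can never succeed for a minimum $J$ with $|V(G)\setminus K|\ge 2$, since it strictly decreases $|J|$. Semitotality can also break. An edge-vertex $v^{s,t}\in J$ with $v_s,v_t\notin K$ may have had $v^{0,s}$ as its only witness, and it is at distance $3$ from $v_0$. The subcase $K=\emptyset$ is also left unresolved.

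The missing idea is to handle $v_0\notin J$ directly, as the paper's Case~2 does, rather than normalising. Let $k=|V(G)\setminus K|$.
\begin{enumerate}
\item[(a)] If $k=1$, with $V(G)\setminus K=\{v_i\}$, then $J=K\cup\{v^{0,i}\}$. The witness of $v^{0,i}$ must be a vertex of $K$ non-adjacent to $v_i$ in $G$, so $K$ alone is an ISDS of $\mathtt{C}(G)$.
\item[(b)] If $k\ge 2$, let $B=\{v_i\in V(G)\setminus K: N_G(v_i)=K\}$; these are exactly the vertices your replacement loses.
\begin{enumerate}
\item[(b1)] If $B\neq\emptyset$, delete all $v^{0,i}$ and add one $v_1\in B$. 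This gives an ISDS of $\mathtt{C}(G)$ of size $|J|-k+1$. Here $v_1$ dominates the rest of $B$ through $\overline{G}$, and it is a witness for $K$ and for every remaining edge-vertex of $J$.
\item[(b2)] If $B=\emptyset$, deleting all $v^{0,i}$ leaves an IDS of $\mathtt{C}(G)$ of size at most $|J|-2$. Theorem~\ref{theo-it2-i} then yields $i_{t2}(\mathtt{C}(G))\le |J|-1$.
\end{enumerate}
\end{enumerate}

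Finally, in your remaining case $J\cap V(G)=\{v_i\}$ the clique swap is the right move, and your second alternative (applying it to $J'$) works directly. However, you still have to justify $|C|\le |C'|$. Since $v_0$ and $v_i$ are both universal in $H$, every edge-vertex of $G-v_i$ lies in $J$. Domination forces $G-v_i$ to have no isolated vertices. The witnesses of those edge-vertices force every component of $G-v_i$ to have order at least three.
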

	
\begin{proof}
Let $V(K_1 +G) = \{v_1,\dots, v_n, v_{n+1}\}$, where $V(G) = \{v_1,\dots, v_n\}$ and $V(K_1)= \{v_{n+1}\}$. Observe that $i_{t2}(\mathtt{C}(K_1 + P_2))=3 = i_{t2} (\mathtt{C}(P_2)) + 1$, as desired. Henceforth, assume that $n\geq 3$. We first prove that $i_{t2} (\mathtt{C}(K_1 + G)) \leq i_{t2} (\mathtt{C}(G)) + 1$.  Let $J$ be an $i_{t2}(\mathtt{C}(G))$-set satisfying the condition given in Lemma~\ref{lem-join}. Since $J\cap V(G)\neq \emptyset$, it is straightforward to verify that $J \cup \{v_{n+1}\}$ is an ISDS of $\mathtt{C}(K_1 + G)$. Therefore, $i_{t2} (\mathtt{C}(K_1+G)) \leq |J|+1 = i_{t2} (\mathtt{C}(G)) + 1$, as desired. It remains to prove that $i_{t2} (\mathtt{C}(K_1 + G)) \geq i_{t2} (\mathtt{C}(G)) + 1$. Among all $i_{t2}(\mathtt{C}(K_1+G))$-sets satisfying the condition given in Lemma~\ref{lem-join}, let $I$ be one for which $|I\cap V(K_1+G)|$ is maximum. We consider the following two complementary cases.

\vspace{.2cm}
		
\noindent
Case 1: $v_{n+1} \in I$. Let $v^{i,j} \in I\cap V_E(G)$. Since $d_{\mathtt{C}(K_1+G)} (v^{i,j}, v_{n+1}) = 3$, there exists a vertex $x \in I \cap V(\mathtt{C}(G))$ such that $d_{\mathtt{C}(G)} (v^{i,j},x) = 2$. Observe that $I\cap V(G) \neq \emptyset$. 
We claim that every vertex in $I\cap V(G)$ has another vertex in $I \cap V(\mathtt{C}(G))$ at distance two. Let $v_k\in I\cap V(G)$. If there exists a vertex $v_i\in (I\cap V(G))\setminus \{v_k\}$, then, since $G[I \cap V(G)]$ is a clique, we have that $d_{\mathtt{C}(G)} (v_i,v_k) = 2$, as desired. On the other hand, assume that $I\cap V(G)=\{v_k\}$. Suppose that $N_G[v_k]=V(G)$. Since $I$ is an ISDS of $\mathtt{C}(K_1 + G)$ and $|N_{K_1+G} (v_{n+1}) | = |N_{K_1 + G} (v_k) | = n$, each component of $G - \{v_k\}$ has order at least three. Let $C\subseteq V(G)\setminus \{v_k\}$ be a set of maximum cardinality such that $G[C]$ is a clique, and let $Y = \{v^{i,j} \in V_E(G-\{v_k\}) :\ \{v_i,v_j\} \cap C\neq \emptyset\}$. Observe that $|C| \leq |Y|$. It is straightforward to verify that $I' = (I\setminus Y) \cup C$ is an ISDS of $\mathtt{C}(K_1 + G)$. Hence, $|I'| = |I| - |Y| + |C| \leq |I| = i_{t2} (\mathtt{C}(K_1 + G))$. Consequently, $I'$ is an $i_{t2}(\mathtt{C}(K_1+G))$-set satisfying the condition given in Lemma~\ref{lem-join} such that $|I'\cap V(G) | > |I\cap V(G)|$, which contradicts the choice of $I$. Thus, $N_G[v_k]\neq V(G)$. Therefore, there exists a vertex $v_l\in V(G)\setminus N_G[v_k]$. Since $G$ is a connected graph of order at least three, there exists a vertex $v_r\in N_G(v_l)$. Observe that $v^{l,r}\in I$ and $d_{\mathtt{C}(G)} (v^{l,r}, v_k) =2$, as desired. In either case, every vertex in $I\cap V(G)$ has another vertex in $I \cap V(\mathtt{C}(G))$ at distance two.  Consequently, $I\setminus\{v_{n+1}\}$ is an ISDS of $\mathtt{C}(G)$. Therefore, $i_{t2} (\mathtt{C}(K_1 + G)) =  |I\setminus \{v_{n+1}\}| +1 \geq i_{t2} (\mathtt{C}(G)) + 1$, as desired.

\vspace{.25cm}
		
\noindent
Case 2: $v_{n+1} \notin I$. By Lemma \ref{lem-join}, we have that $I\cap V(K_1+G)\neq \emptyset$. Consequently, $I\cap V(G) \neq \emptyset$. Since $N_{\mathtt{C}(K_1 +G)} (v_{n+1}) \cap I \neq \emptyset$, we deduce that $V(G) \setminus I \neq \emptyset$. Now, we consider the following two subcases.

\vspace{.2cm}

\noindent
Subcase 2.1: $V(G) \setminus I=\{v_k\}$. Observe that $N_{\mathtt{C}(K_1+G)}(v_{n+1})\cap I=\{v^{k,n+1}\}$. 
Since $|V(G) \cap I|=n-1\geq 2$, it is straightforward to verify that $I\setminus\{v^{k,n+1}\}$ is an ISDS of $\mathtt{C}(G)$. Therefore, $i_{t2} (\mathtt{C}(K_1+G)) = |I\setminus \{v^{k,n+1}\}| +1 \geq i_{t2} (\mathtt{C}(G)) + 1$, as desired. 

\vspace{.2cm}

\noindent
Subcase 2.2: $|V(G) \setminus I| \geq 2$. Without loss of generality, assume that $V(G) \setminus I = \{v_1,\ldots, v_k\}$. Observe that $\{v^{1,n+1}, \dots, v^{k,n+1}\} \subseteq I$. Let $B = \{v_i \in V(G) \setminus I :\ N_G(v_i) = I\cap V(G) \}$. If $B = \emptyset$, then it is straightforward to verify that $I\setminus\{v^{1,n+1}, \dots, v^{k,n+1}\}$ is an IDS of $\mathtt{C}(G)$. Therefore, using this fact together with Theorem~\ref{theo-it2-i}, we obtain that $i_{t2} (\mathtt{C}(K_1+G)) = |I\setminus\{v^{1,n+1}, \dots, v^{k,n+1}\}| +k \geq i (\mathtt{C}(G)) + 2\geq i_{t2} (\mathtt{C}(G)) + 1$, as desired. On the other hand, assume that $B\neq \emptyset$. Without loss of generality, suppose that $v_1\in B$. Since $V(G) \cap I \neq \emptyset$, it is easy to check that $(I\setminus \{v^{1,n+1}, \dots, v^{k,n+1}\}) \cup \{v_1\}$ is an ISDS of $\mathtt{C}(G)$. Therefore, $i_{t2} (\mathtt{C}(K_1+G)) = |(I\setminus\{v^{1,n+1}, \dots, v^{k,n+1}\})\cup \{v_1\}| +k-1 \geq i_{t2} (\mathtt{C}(G)) + 1$, as desired.

\vspace{.25cm}
		
\noindent
From the two previous cases, we conclude that $i_{t2} (\mathtt{C}(K_1+G))=i_{t2} (\mathtt{C}(G)) +1$, which completes the proof.
\end{proof}

\noindent
Finally, the following theorem provides infinite many graphs attaining each of the two possible values of $i_{t2}(\mathtt{C}(G))$. More precisely, it shows that whether $i_{t2}(\mathtt{C}(G))$ equals $i(\mathtt{C}(G))$ or $i(\mathtt{C}(G))+1$ is preserved when passing from $G$ to $K_1+G$.

\begin{theorem}
Let $G$ be a connected graph of order at least three. Then the following statements hold.
\begin{enumerate}
\item[{\rm (i)}] If $i_{t2}(\mathtt{C}(G))=i(\mathtt{C}(G))$, then $i_{t2}(\mathtt{C}(K_1+G))=i(\mathtt{C}(K_1+G))$.
\item[{\rm (ii)}] If $i_{t2}(\mathtt{C}(G))=i(\mathtt{C}(G))+1$, then $i_{t2}(\mathtt{C}(K_1+G))=i(\mathtt{C}(K_1+G))+1$.
\end{enumerate}
\end{theorem}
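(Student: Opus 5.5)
The plan is to reduce both statements to the single identity
$$i(\mathtt{C}(K_1+G))=i(\mathtt{C}(G))+1 .$$
Combined with Theorem~\ref{theo-K1+G}, which gives $i_{t2}(\mathtt{C}(K_1+G))=i_{t2}(\mathtt{C}(G))+1$, this identity shows that the difference $i_{t2}-i$ is the same for $G$ and for $K_1+G$. Statements (i) and (ii) then follow at once. Throughout, write $V(K_1)=\{v_{n+1}\}$.

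\emph{Upper bound.} Let $I$ be any $i(\mathtt{C}(G))$-set. I would check that $I\cup\{v_{n+1}\}$ is an IDS of $\mathtt{C}(K_1+G)$.
\begin{itemize}
\item In $\mathtt{C}(K_1+G)$ the vertex $v_{n+1}$ is adjacent only to the new vertices $v^{i,n+1}$, because it is adjacent to every $v_i$ in $K_1+G$. None of these new vertices lies in $I$, so independence is preserved.
\item Each $v^{i,n+1}$ is dominated by $v_{n+1}$.
\item Each old vertex is dominated exactly as in $\mathtt{C}(G)$.
\end{itemize}
This argument needs no condition on $I\cap V(G)$, so $i(\mathtt{C}(K_1+G))\le i(\mathtt{C}(G))+1$.

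\emph{Lower bound.} Let $I$ be an $i(\mathtt{C}(K_1+G))$-set. I would follow the case structure of the proof of Theorem~\ref{theo-K1+G}, dropping the distance-two requirements.

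\emph{Case $v_{n+1}\in I$.} Then no $v^{i,n+1}$ lies in $I$. Every vertex of $V(G)\setminus I$ is dominated by a vertex other than $v_{n+1}$, since $v_{n+1}$ is not adjacent to $V(G)$ in $\mathtt{C}(K_1+G)$. Hence $I\setminus\{v_{n+1}\}$ is an IDS of $\mathtt{C}(G)$, and the bound follows.

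\emph{Case $v_{n+1}\notin I$.} Put $A=V(G)\setminus I=\{v_1,\dots,v_k\}$. Then $A\neq\emptyset$, and $\{v^{1,n+1},\dots,v^{k,n+1}\}\subseteq I$, since each $v^{i,n+1}$ with $v_i\notin I$ must be dominated and its only possible dominators $v_i,v_{n+1}$ are not in $I$. Remove these $k$ vertices from $I$. A vertex $v_i\in A$ may now be undominated in $\mathtt{C}(G)$. This happens exactly when $I\cap V(G)\subseteq N_G(v_i)$ and no $v^{i,j}$ with $j\le n$ lies in $I$. Call such vertices \emph{bad}.
\begin{itemize}
\item Two bad vertices $v_i,v_j$ cannot be adjacent in $G$: otherwise $v^{i,j}$ would be undominated in $\mathtt{C}(K_1+G)$.
\item Hence the bad vertices are pairwise adjacent in $\mathtt{C}(G)$, and adding a single bad vertex $v_1$ dominates all of them.
\item Adding $v_1$ preserves independence, because of the defining property of bad vertices.
\end{itemize}
This produces an IDS of $\mathtt{C}(G)$ of size at most $|I|-k+1$. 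That is enough whenever $k\ge 2$, or when $k=1$ and there is no bad vertex.

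The main obstacle is the subcase $k=1$ in which $v_1$ is bad. Then $v_1$ is adjacent in $G$ to all $n-1\ge 2$ vertices of $I\cap V(G)$. Moreover $G[I\cap V(G)]$ is a clique: two nonadjacent vertices of $V(G)$ are adjacent in $\mathtt{C}(G)$ and so cannot both lie in $I$. Together these force $G\cong K_n$, and then $I=(V(G)\setminus\{v_1\})\cup\{v^{1,n+1}\}$. Here the swap argument gives only $|I|$, not $|I|-1$.

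For this subcase I would compute $i(\mathtt{C}(K_n))$ and $i(\mathtt{C}(K_{n+1}))$, together with the corresponding $i_{t2}$ values, directly. I would then verify (i) and (ii) by hand for $G\cong K_n$, rather than through the identity, which may fail for complete graphs.
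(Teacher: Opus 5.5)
Your proof is correct and uses the same reduction as the paper: combine Theorem~\ref{theo-K1+G} with the identity $i(\mathtt{C}(K_1+G))=i(\mathtt{C}(G))+1$. The difference is that the paper simply cites this identity from \cite{Ind-Central}, whereas you prove it. Your argument for it is sound. The upper bound comes from adding $v_{n+1}$. The lower bound comes from deleting either $v_{n+1}$ or the forced vertices $v^{i,n+1}$, then repairing domination with one bad vertex, using the fact that bad vertices are pairwise adjacent in $\mathtt{C}(G)$.

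The subcase you flag is not an exception, and your hedge that the identity "may fail for complete graphs" is unfounded. When $G\cong K_n$ and $I=(V(G)\setminus\{v_1\})\cup\{v^{1,n+1}\}$, the set $(V(G)\setminus\{v_1,v_2\})\cup\{v^{1,2}\}$ is an IDS of $\mathtt{C}(K_n)$ of size $n-1=|I|-1$; recall that $\mathtt{C}(K_n)=\mathtt{S}(K_n)$. This gives the required lower bound directly. In fact $i(\mathtt{C}(K_m))=m-1$ for every $m\ge 2$, so the identity holds uniformly and no separate by-hand check of (i) and (ii) is needed. For the record, $i_{t2}(\mathtt{C}(K_n))=n$, so complete graphs fall under case (ii), consistent with the theorem.

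Your version makes the result self-contained at the cost of length; the paper's is a two-line deduction resting on the external result.
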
 

\begin{proof}
It was established in \cite{Ind-Central} that $i(\mathtt{C}(K_1+G))=i(\mathtt{C}(G))+1$. Furthermore, by Theorem~\ref{theo-K1+G}, we have that $i_{t2}(\mathtt{C}(K_1+G))=i_{t2}(\mathtt{C}(G))+1$. Consequently, for each $k\in \{0,1\}$, if $i_{t2}(\mathtt{C}(G))=i(\mathtt{C}(G))+k$, then 
$$i_{t2}(\mathtt{C}(K_1+G))=i_{t2}(\mathtt{C}(G))+1=i(\mathtt{C}(G))+k+1=i(\mathtt{C}(K_1+G))+k.$$
Setting $k=0$ and $k=1$ gives statements (i) and (ii), respectively, which completes the proof.
\end{proof}

\end{document}